\numberwithin{equation}{section}
\newtheorem{Theorem}{Theorem}[section]
\newtheorem*{Theorem*}{Theorem}
\newtheorem{Lemma}[Theorem]{Lemma}
\newtheorem{Proposition}[Theorem]{Proposition}
{ \theoremstyle{definition}
\newtheorem{Definition}[Theorem]{Definition}

\newtheorem{Example}[Theorem]{Example}
\newtheorem{Remark}[Theorem]{Remark}
\newtheorem{Question}[Theorem]{Question}
}
\theoremstyle{plain}
\newtheorem*{namedthm}{\namedthmname}
\newtheorem{bigthm}{Theorem}
 \newcommand{\R}{\mathbb R}
 \newcommand{\C}{\mathbb C}
 \newcommand{\N}{\mathbb N}
 \newcommand{\vep}{\varepsilon}
 \newcommand{\f}{\varphi}
 \newcommand \psh {{\rm PSH}}
 \newcommand \PSH {{\rm PSH}}
 \newcommand \SH {{\rm SH}}
 \newcommand{\id}{{\bf 1}}
 \newcommand \Amp {{\rm Amp}}
 \newcommand \vol{{\rm Vol}}
 \newcommand \Ent{{\rm Ent}}
\newcommand{\Ric}{{\rm Ric}}
 \newcommand \cE{\mathcal E }
 \newcommand \cM {\mathcal{M}}
\begin{document}

\allowdisplaybreaks

\renewcommand{\thefootnote}{}

\newcommand{\arXivNumber}{2310.10152}

\renewcommand{\PaperNumber}{039}

\FirstPageHeading

\ShortArticleName{Entropy for Monge--Amp\`ere Measures in the Prescribed Singularities Setting}

\ArticleName{Entropy for Monge--Amp\`ere Measures\\ in the Prescribed Singularities Setting\footnote{This paper is a~contribution to the Special Issue on Differential Geometry Inspired by Mathematical Physics in honor of Jean-Pierre Bourguignon for his 75th birthday. The~full collection is available at \href{https://www.emis.de/journals/SIGMA/Bourguignon.html}{https://www.emis.de/journals/SIGMA/Bourguignon.html}}}

\Author{Eleonora DI NEZZA~$^{\rm a}$, Stefano TRAPANI~$^{\rm b}$ and Antonio TRUSIANI~$^{\rm c}$}

\AuthorNameForHeading{E.~Di Nezza, S.~Trapani and A.~Trusiani}

\Address{$^{\rm a)}$~IMJ-PRG, Sorbonne Universit\'e \& DMA, \'Ecole Normale Sup\'erieure, Universit\'e PSL, CNRS,\\
\hphantom{$^{\rm a)}$}~4 place Jussieu \& 45 Rue d'Ulm, 75005~Paris, France}
\EmailD{\href{mailto:eleonora.dinezza@imj-prg.fr}{eleonora.dinezza@imj-prg.fr}, \href{mailto:edinezza@dma.ens.fr}{edinezza@dma.ens.fr}}

\Address{$^{\rm b)}$~Universit\`a di Roma Tor Vergata, Via della Ricerca Scientifica 1, 00133~Roma, Italy}
\EmailD{\href{mailto:trapani@mat.uniroma2.it}{trapani@mat.uniroma2.it}}

\Address{$^{\rm c)}$~Chalmers University of Technology, Chalmers tv\"argata 3, 41296 G\"oteborg, Sweden}
\EmailD{\href{mailto:trusiani@chalmers.se}{trusiani@chalmers.se}}

\ArticleDates{Received October 16, 2023, in final form May 04, 2024; Published online May 08, 2024}

\Abstract{In this note, we generalize the notion of entropy for potentials in a relative full Monge--Amp\`ere mass $\mathcal{E}(X, \theta, \phi)$, for a model potential $\phi$. We then investigate stability properties of this condition with respect to blow-ups and perturbation of the cohomology class. We also prove a Moser--Trudinger type inequality with general weight and we show that functions with finite entropy lie in a relative energy class $\mathcal{E}^{\frac{n}{n-1}}(X, \theta, \phi)$ (provided $n>1$), while they have the same singularities of $\phi$ when $n=1$.}

\Keywords{K\"ahler manifolds; Monge--Amp\`ere energy; entropy; big classes}

\Classification{32W20; 32U05; 32Q15; 35A23}

\begin{flushright}
\begin{minipage}{65mm}
\it Dedicated to Jean-Pierre Bourguignon\\
 on the occasion of his 75th birthday
\end{minipage}
\end{flushright}

\renewcommand{\thefootnote}{\arabic{footnote}}
\setcounter{footnote}{0}

\section{Introduction}

 Probability measures with finite entropy play an important role in the search of canonical metrics in K\"ahler geometry (see, e.g., \cite{BBEGZ19,BBGZ13, BBJ15, BDL17,BEGZ10, ChCh1, ChCh2}).

Let $(X, \omega)$ be a compact K\"ahler manifold of complex dimension $n\geq 1$ and assume $\omega$ is normalized such that $\vol(\omega):=\int_X \omega^n=1$. It is well known that K\"ahler metrics with constant scalar curvature are critical points of the Mabuchi K-energy defined as
\[
\cM_{\omega}(u) := \bar{S}_{\omega} E(u) - n E_{\Ric(\omega)}(u) + \Ent(\omega_u^n, \omega^n), \qquad u \in \cE^1(X,\omega),
\]
where the first two terms are energy terms while the latter is the entropy of the Monge--Amp\`ere measure $\omega_u^n:= (\omega+ {\rm dd}^c u)^n$.

 Here, given two positive Radon measures $\mu$, $\nu$, the relative entropy~$\Ent(\mu, \nu)$ is defined as
\[
\Ent(\mu, \nu) := \int_X \log\left (\frac{{\rm d}\mu}{{\rm d}\nu}\right) {\rm d}\mu,
\]
if $\mu$ is absolutely continuous with respect to $\nu$, and $+\infty$ otherwise.

The breakthrough result of Cheng and Chen \cite{ChCh1, ChCh2} ensures that the existence of a cscK metric is equivalent to the properness of the Mabuchi functional. It is then crucial to relate the two notions of energy and entropy and to investigate them.

The first step in this direction has been done in \cite{BBEGZ19}, where the authors proved that full mass $\omega$-psh functions whose Monge--Amp\`ere measure has finite entropy have finite energy as well. In other words, they prove the following inclusion:
\[
\Ent(X,\omega)\cap \mathcal{E}(X, \omega) \subset {\mathcal E}^1(X,\omega),
\]
 where $\Ent(X,\omega)\cap \mathcal{E}(X, \omega)$ is the set of $\omega$-psh functions $u$ such that $\omega_u^n$ has finite entropy with respect to a fixed volume form, say $\omega^n$.
However, as all computable examples suggest, $\Ent(X,\omega)$ is actually contained in a higher energy class ${\mathcal E}^p(X,\omega)$ for some $p>1$ depending on the dimension.
In \cite{DGL20}, the authors indeed proved that
\[
\Ent(X,\omega)\cap \mathcal{E}(X, \omega) \subset {\mathcal E}^{\frac{n}{n-1}}(X,\omega).
\]

In a series of papers \cite{DDL3, DDL2, DDL1}, Darvas, Di Nezza and Lu developed the pluripotential theory in the relative big setting (see also \cite{Trus19, Trus20} for the K\"ahler case). This proved to be very fruitful, and it allows us to work with potentials with not necessarily full mass in the general setting of a~big cohomology class. In particular, given a big class $\{\theta\}$ and a ``special'' $\theta$-psh function~$\phi$ (called \emph{model potential}), they defined and studied relative Monge--Amp\'ere energy classes $\mathcal{E}(X, \theta, \phi)$ and~$\mathcal{E}_\chi(X, \theta, \phi)$, where $\chi\colon \R^+ \rightarrow \R^+$ is a weight function. These classes comprehend the energy classes defined in \cite{BEGZ10,GZ07} for a particular choice of $\phi$. Let us emphasize that such classes are at the heart of the variational approach for the search of (singular) K\"ahler--Einstein metrics. In the relative setting, the classes $\mathcal{E}(X, \theta, \phi)$ and $\mathcal{E}_\chi(X, \theta, \phi)$ are in turn crucial for the construction of K\"ahler--Einstein metrics with prescribed singularities, where the word ``prescribed singularities" mean that the singularities of the potential of the metric are modeled by $\phi$, as showed in \cite{DDL2, Tru1, Trus20b}.

The study of such classes, and their interplay with the (generalized) entropy is then useful to pursue the study of singular cscK metrics.

In this note, we define and study the entropy for Monge--Amp\`ere measures $\theta_\varphi^n:=(\theta+{\rm dd}^c \varphi )^n$ not necessarily with full mass, i.e.,
\[\int_X \theta_\varphi^n :=m \in (0, \vol(\theta)].\]
The function $\varphi$ belongs to a relative full mass class $ \mathcal{E}(X, \theta,\phi)$, where $\phi=P_\theta[\varphi]$ is a model potential with mass $m$. We refer to Section \ref{sec:prelim} for the definitions of all these notions.

We then define the $\theta$\emph{-entropy} of $\varphi$ as
\begin{equation*}
 {\rm Ent}_\theta(\varphi):= {\rm Ent}\big( m^{-1}\theta_\varphi^n, {\omega^n}\big).
 \end{equation*}

 We establish the stability properties of entropy with respect to proper bimeromorphic maps and we generalize the result in \cite{DGL20} relating finite entropy to finite relative energy. More precisely, given $\phi$ a model potential with positive mass, i.e., $\int_X \theta_\phi^n>0$, we prove:

\begin{bigthm}[Theorem \ref{thm: Entropy implies rel Ep}]
 With assumptions as above, we have
 \[\Ent(X,\theta)\cap \mathcal{E}(X,\theta,\phi) \subset \mathcal{E}^{\frac{n}{n-1}}(X,\theta, \phi) \]
 for any $n>1$. For $n=1$, any $\varphi\in \Ent(X,\theta)\cap \mathcal{E}(X,\theta,\phi)$ satisfies $\varphi-\phi$ bounded.
\end{bigthm}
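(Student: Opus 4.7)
The plan is to adapt the strategy of \cite{DGL20} from the full-mass K\"ahler setting to the relative big setting, using the relative Moser--Trudinger inequality (with general weight) established earlier in this paper as the key analytic input. Set $p = n/(n-1)$ and let $f = {\rm d}\theta_\varphi^n / {\rm d}\omega^n$ denote the Monge--Amp\`ere density; the hypothesis reads $\int_X f\log f\, \omega^n < +\infty$. Since finiteness of $\int_X (\phi - \varphi)^p\, \theta_\varphi^n$ characterizes membership in $\mathcal{E}^p(X,\theta,\phi)$, this is the quantity we must control.

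The core estimate is the Fenchel inequality for $t \mapsto t\log t$: for $a\geq 0$, $s \in \R$ and $\alpha > 0$,
\[\alpha a s \leq a\log a - a + e^{\alpha s}.\]
Applying this pointwise with $a = f$ and $s = (\phi - \varphi)^p$ and integrating against $\omega^n$ yields
\[\alpha \int_X (\phi - \varphi)^p\, \theta_\varphi^n \leq m\,\Ent_\theta(\varphi) + m\log m - m + \int_X \exp\bigl(\alpha(\phi - \varphi)^p\bigr)\,\omega^n,\]
so the problem reduces to bounding the exponential integral by Moser--Trudinger with weight $\chi(t) = t^p$. The apparent circularity---Moser--Trudinger is formulated on $\mathcal{E}^p(X,\theta,\phi)$, which is what we are trying to reach---is the main technical obstacle. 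I would handle it by the standard approximation $\varphi_j := \max(\varphi, \phi - j)$: these potentials share the singularities of $\phi$, hence automatically lie in $\mathcal{E}^p(X,\theta,\phi)$, and the task is to show that the Moser--Trudinger constant $\alpha$ can be chosen independently of $j$. This reduces to uniform control of $\Ent_\theta(\varphi_j)$ and of the relative energy of $\varphi_j$, using that $\theta_{\varphi_j}^n \leq C\,\theta_\varphi^n$ on $\{\varphi > \phi - j\}$ together with the contraction property of entropy under mass-preserving truncation. Monotone convergence of $\varphi_j \downarrow \varphi$ combined with the stability of Monge--Amp\`ere along this sequence then transports the uniform bound to $\varphi$ itself.

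For $n = 1$ the target exponent $p$ is infinite and the claim is that $\varphi - \phi$ is bounded. The one-dimensional Moser--Trudinger inequality on a Riemann surface gives $\int_X \exp(\alpha(\phi - \varphi))\,\omega < +\infty$ for \emph{every} $\alpha > 0$ on bounded subsets of the relative energy class. Iterating the Fenchel estimate with $s = (\phi - \varphi)^q$ for all $q \geq 1$ then produces $L^q(\theta_\varphi)$-bounds on $\phi - \varphi$ whose dependence on $q$ can be tracked; feeding these into the elliptic equation ${\rm dd}^c(\varphi - \phi) = \theta_\varphi - \theta_\phi$, whose right-hand side has $L\log L$ density, and running a Brezis--Merle / Trudinger-type iteration finally yields $\phi - \varphi \in L^\infty$. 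This Brezis--Merle step on a Riemann surface is the most delicate part of the argument, as the simple substitution that closes the case $n > 1$ no longer suffices.
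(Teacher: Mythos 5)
For $n>1$ your overall strategy --- Legendre duality for $t\mapsto t\log t$ played against the weighted Moser--Trudinger inequality with $\chi_1(t)=t^p$, $p=\frac{n}{n-1}$ --- is the same as the paper's, but the step you propose to break the circularity does not work as stated. You ask for ``uniform control of $\Ent_\theta(\varphi_j)$'' for the truncations $\varphi_j=\max(\varphi,\phi-j)$ via a ``contraction property of entropy under mass-preserving truncation''. No such property is available: by plurifine locality $\theta_{\varphi_j}^n$ agrees with $\theta_\varphi^n$ only on $\{\varphi>\phi-j\}$, while the mass it carries on $\{\varphi\le\phi-j\}$ --- in particular on the contact set $\{\varphi=\phi-j\}$ --- is a genuinely new measure, not dominated by $\theta_\varphi^n$, and it need not even be absolutely continuous with respect to $\omega^n$ (Example~\ref{exa: No Ent} is exactly a truncation-type potential whose Monge--Amp\`ere measure charges a real torus). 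So $\Ent_\theta(\varphi_j)$ can be infinite for every $j$, and your scheme stalls. The paper avoids this entirely: it keeps the fixed finite-entropy density $f$ of $\theta_\varphi^n$ on one side of the Fenchel inequality and puts $c(\phi-v)^p$ on the other side for an \emph{arbitrary} $v\in\mathcal{E}^p(X,\theta,\phi)$, to which Moser--Trudinger legitimately applies; this gives $\int_X(\phi-v)^p\theta_\varphi^n<+\infty$ for all such $v$, and then \cite[Theorem~1.4]{DDL6} places $\varphi$ itself in $\mathcal{E}^p$. Your truncation route can be repaired, but only by the same move: pair $f$ (not the density of $\theta_{\varphi_j}^n$, which may not exist) with $(\phi-\varphi_j)^p$, and note that $\int_X(\phi-\varphi_j)^p\theta_\varphi^n=E_p(\varphi_j,\phi)$ because the two measures have equal total mass, coincide on $\{\varphi>\phi-j\}$, and $\phi-\varphi_j\equiv j$ on the complement; this yields $c\,E_p(\varphi_j,\phi)^{1-1/n}\le C_1$ uniformly in $j$ and one concludes by monotonicity of the truncated energies. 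As written, the entropy-of-$\varphi_j$ step is a genuine gap.

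For $n=1$ your argument diverges from the paper's and also has a gap. The equation ${\rm dd}^c(\varphi-\phi)=\theta_\varphi-\theta_\phi$ with the \emph{non-pluripolar} measures on the right-hand side is not correct: the full currents $\theta+{\rm dd}^c\varphi$ and $\theta+{\rm dd}^c\phi$ carry singular parts on the polar sets, and the assertion that these cancel is essentially equivalent to the conclusion $[\varphi]=[\phi]$ you are trying to prove --- a priori only the total masses of the singular parts agree. (The Orlicz-duality observation that an $L\log L$ right-hand side produces a bounded logarithmic potential in complex dimension one is sound, but it is being applied to the wrong equation.) The paper instead deduces from positivity of $\Ent\big(m_\varphi^{-1}\theta_\varphi,a^{-1}{\rm e}^{-cu}\omega\big)$ that every $u\in\SH(X,\omega)$ is $\theta_\varphi$-integrable, and then proves a separate lemma --- using the logarithmic test functions $\psi_a^k$, integration by parts, and sub-mean-value limits --- showing that this integrability forces $|\varphi-\phi|$ to be bounded. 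You would need an argument of comparable care to handle the polar parts before any Brezis--Merle-type iteration can be run.
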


 Along the way, we obtain a Moser--Trudinger type inequality with a general \emph{weight}, i.e., a~continuous strictly increasing function $\chi\colon [0,+\infty)\to [0,+\infty)$ such that $\chi(0)=0$ and ${\chi(+\infty)=+\infty}$.

\begin{bigthm}[Theorem \ref{thm: rel Moser--Trudinger inequality weight}]
Let $\chi_1\colon\R^+\rightarrow \R^+ $ be a weight. Let $\chi_2(t) := \int_0^t \chi_1(s)^{\frac{1}{n}} {\rm d}s$. Then there exist $c>0$, $C>0$ depending on $X$, $\theta$, $n$, $\int_X \theta_\phi^n$ and $\omega$ such that, for all $\varphi\in \mathcal{E}_{\chi_1}(X,\theta, \phi)$ with $\sup_X \varphi=-1$, we have
\[
 	\int_X \exp \big( c E_{\chi_1}(\varphi, \phi)^{-\frac{1}{n}} \chi_2(\phi-\varphi)\big)\omega^n \leq C.
\]
\end{bigthm}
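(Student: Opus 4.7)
The plan is to adapt the Moser--Trudinger argument of \cite{DGL20} to the prescribed singularities setting, using the relative pluripotential machinery of \cite{DDL1,DDL2,DDL3,Trus19,Trus20}. Set $u:=\phi-\varphi\geq 0$ and $M:=E_{\chi_1}(\varphi,\phi)$. The argument has three steps: (i) bound $\capo\{u>t\}$ by $CM/\chi_2(t)^n$; (ii) convert this into an $\omega^n$-volume decay via a relative Alexander--Taylor / Ko\l{}odziej type estimate; (iii) integrate using a layer-cake decomposition.

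The heart of the proof is (i). The naive Chebyshev inequality
\[
\chi_1(t)\,\capo\{u>t\}\leq \int_{\{u>t\}}\chi_1(u)\,\theta_\varphi^n\leq M
\]
yields only $\capo\{u>t\}\leq M/\chi_1(t)$, which is insufficient: $\chi_2(t)=\int_0^t\chi_1(s)^{1/n}\mathrm{d}s$ grows like $t\,\chi_1(t)^{1/n}$, strictly faster than $\chi_1(t)^{1/n}$. To upgrade $\chi_1$ to $\chi_2^n$ I would run a Ko\l{}odziej type iteration on a telescoping sequence of levels $t_k$. A single iteration step uses the relative comparison principle in $\mathcal{E}(X,\theta,\phi)$, applied to an extremal envelope realizing $\capo\{u>t+s\}$, to obtain a one-step relation of the form
\[
s^n\,\capo\{u>t+s\}\leq C\int_{\{u>t\}}\theta_\varphi^n.
\]
Combined with the weighted Chebyshev bound at each level, and with increments $s_k$ chosen so that $\sum_k s_k\chi_1(t_k)^{1/n}$ telescopes to $\chi_2(t)$, this gives $\chi_2(t)^n\,\capo\{u>t\}\leq C\,M$ for $t$ beyond a threshold depending only on $X,\theta,\omega,\int_X\theta_\phi^n$.

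Step (ii) is the relative Alexander--Taylor inequality
\[
\int_K\omega^n\leq C\exp\bigl(-\alpha/\capo(K)^{1/n}\bigr),\qquad K\subset X\ \text{Borel},
\]
which is available because $\{\theta\}$ is big and $\phi$ has positive mass. Combined with (i) it yields $\int_{\{u>t\}}\omega^n\leq C\exp\bigl(-\alpha\chi_2(t)/M^{1/n}\bigr)$. For (iii), the layer-cake formula applied to $e^{cM^{-1/n}\chi_2(u)}$, with $\chi_2'(t)=\chi_1(t)^{1/n}$ and the change of variable $\tau=\chi_2(t)/M^{1/n}$, reduces the left-hand side to a constant multiple of $\int_0^\infty e^{(c-\alpha)\tau}\mathrm{d}\tau$; any choice $c<\alpha$ then produces the desired bound, with constants depending only on $X$, $\theta$, $\omega$, $n$ and $\int_X\theta_\phi^n$.

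The main obstacle will be step (i): the iteration has to be carried out entirely inside the relative full mass class $\mathcal{E}(X,\theta,\phi)$, with a possibly unbounded model potential $\phi$ whose mass is strictly smaller than $\vol(\theta)$. Both the construction of the extremal envelopes realizing $\capo$ on level sets $\{u>t\}$ and the repeated invocations of the comparison principle need to be handled in the prescribed singularities setting; this is exactly the point where the pluripotential framework of \cite{DDL1,DDL2,DDL3,Trus19,Trus20} becomes indispensable. Once step (i) is in place, steps (ii) and (iii) are standard reformulations of the Moser--Trudinger machinery of \cite{DGL20}.
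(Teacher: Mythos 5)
Your architecture (capacity decay of the sublevel sets $\{\phi-\varphi>t\}$, then a volume--capacity inequality, then layer-cake integration) is genuinely different from the paper's argument, which follows the Berman--Berndtsson envelope method: the paper sets $\psi=-a\chi_2(\phi-\varphi)+\phi$ and $u=P_\theta(\psi)$, shows by a mass-contradiction argument that $\sup_X(u-\phi)\geq-\tau_2(1)$ for the explicit normalization $\beta a^n E_{\chi_1}(\varphi,\phi)=m_\phi$ (using that $\theta_u^n$ is concentrated on the contact set, where $\theta_u^n\leq a^n\chi_1(\phi-\varphi)\theta_\varphi^n$), and then applies the uniform Skoda theorem directly to $u$. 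No capacity enters, and the ``gain of $t^n$'' over Chebyshev that you are after is obtained in one stroke from the envelope of the composed function.

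The gap in your proposal is step (i), which is exactly where that gain must be produced. The one-step relation coming from the relative comparison principle reads $s^n\,\capa_{\theta,\phi}\{u>t+s\}\leq\int_{\{u>t\}}\theta_\varphi^n$ and is valid only for $s\in(0,1]$, because the admissible competitor is $h=s\rho+(1-s)\phi$ with $\phi-1\leq\rho\leq\phi$, and for $s>1$ the form $s\theta_\rho+(1-s)\theta_\phi$ need not be positive; so a single step never gains more than a bounded factor in $s$ and at best reproduces the naive bound $\capa_{\theta,\phi}\{u>t+1\}\leq M/\chi_1(t)$. More importantly, the bounds at successive levels do not compound: each application controls $\capa_{\theta,\phi}\{u>t_{k+1}\}$ by $\int_{\{u>t_k\}}\theta_\varphi^n$, which you can only estimate by Chebyshev as $M/\chi_1(t_k)$ --- not by the capacity obtained at the previous level. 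Hence the increments $s_k\chi_1(t_k)^{1/n}$ do not multiply or telescope into $\chi_2(t)^n$ in a denominator; a Ko{\l}odziej-type iteration compounds only when the measure is dominated by a function of the capacity, which is precisely what is unavailable for a general $\theta_\varphi^n$ with merely finite $\chi_1$-energy. The asserted decay $\capa_{\theta,\phi}\{u>t\}\lesssim M/\chi_2(t)^n$ is, after the volume--capacity inequality, essentially equivalent to the theorem itself, so stating it is where the entire difficulty is hidden. Even granting (i), step (ii) requires justification in this generality (a volume--capacity inequality for the $\phi$-relative capacity of a big class with $0<\int_X\theta_\phi^n$ possibly smaller than $\vol(\theta)$ is not the classical Alexander--Taylor statement), and you must verify that the constants $c$, $C$, $\alpha$ come out independent of $\chi_1$ --- the paper secures this by normalizing $\chi_1(1)=1$ and using $E_{\chi_1}(\varphi,\phi)\geq m_\phi$.
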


The above estimate generalizes and unifies several important results present in the literature, such as \cite[Theorem 1.1]{BB11}, \cite[Theorem 2.1]{DGL20}, \cite[Theorem~2.11]{DNL21}.

 We end this note with a very natural question about stability of finiteness of the entropy under deformation of the cohomology class. More precisely, we ask the following:

\begin{Question} Let $\varphi \in \Ent(X, \theta)$. Is true that $\varphi \in \Ent(X, \theta+\varepsilon \omega)$ for $\varepsilon>0$?
\end{Question}

We emphasize that this kind of result does not seem to be known even when $\{\theta\}$ is a K\"ahler class: the (subtle) problem is that it is unknown if the absolute continuity of $\theta_\varphi^n $ with respect to~$\omega^n$ implies that $(\theta+\varepsilon \omega+{\rm dd}^c \varphi)^n $ has a density as well.

We nevertheless prove that if we get a positive answer for some $\varepsilon_0>0$, then $\varphi \in \Ent(X, \theta+t\omega)$ for all $t>0$ (see Proposition \ref{lemma: entropy all time}).

\section{Preliminaries}\label{sec:prelim}

We recall results from (relative) pluripotential theory of big cohomology classes. We borrow notation and terminology from \cite{DDL6}.

Let $(X,\omega)$ be a compact K\"ahler manifold of dimension $n$ and let $\theta$ be a smooth closed $(1,1)$-form on $X$. A function $\varphi\colon X\rightarrow \mathbb{R}\cup\{-\infty\}$ is quasi-plurisubharmonic (qpsh) if it can be locally written as the sum of a plurisubharmonic function and a smooth function, and $\varphi$ is called $\theta$-plurisubharmonic ($\theta$\emph{-psh}) if it is qpsh and $\theta+{\rm dd}^c \varphi\geq 0$ in the sense of currents. Here, ${\rm d}$ and ${\rm d}^c$ are real differential operators defined as ${\rm d}:=\partial +\bar{\partial}$, ${\rm d}^c:=\frac{\rm i}{2\pi}\left(\bar{\partial}-\partial \right)$. We let $\psh(X,\theta)$ denote the set of $\theta$-psh functions that are not identically $-\infty$.
We also assume that $\{\theta\}$ is \emph{big}, i.e., there exists $\psi \in {\rm PSH}(X,\theta)$ such that $\theta +{\rm dd}^c \psi \geq \vep \omega$ for some small constant $\vep>0$. The current $T=\theta+{\rm dd}^c\psi$ is called K\"ahler current.

We say that a $\theta$-psh function $\varphi$ has \emph{analytic singularities} if there exists a constant $c>0$ such that locally on $X$,
\[
\varphi={c}\log\sum_{j=1}^{N}|f_j|^2+g,
\]
where $g$ is bounded and $f_1,\dots,f_N$ are local holomorphic functions.
We say that $\varphi$ has analytic singularities with \emph{smooth remainder} if moreover $g$ is smooth.

The \emph{ample locus} $\Amp(\theta)$ of $\{\theta\}$ is the complement of the \emph{non-K\"ahler locus}
\[E_{\rm nK}(\theta)= \bigcap_{T \ {\text{K\"ahler current}}} \{x\in X \colon \nu(T,x)>0 \},\]
where $\nu(T,x)$ is the Lelong number of $T$ at the point $x$.
The ample locus $\Amp(\theta)$ is a Zariski open subset, and it is nonempty \cite{Bou04}. Also we note that $\Amp(\theta)$ only depends only the cohomology class $\{ \theta\}$.

If $\varphi$ and $\varphi'$ are two $\theta$-psh functions on $X$, then $\varphi'$ is said to be \emph{less singular} than $\varphi$, i.e., $\varphi \preceq \varphi'$, if they satisfy $\varphi\le\varphi'+C$ for some $C\in \mathbb{R}$.
We say that $\varphi$ has the same singularity as~$\varphi'$, i.e., $\varphi \simeq \varphi'$, if $\varphi \preceq \varphi'$ and $\varphi' \preceq \varphi$. The latter condition is easily seen to yield an equivalence relation, whose equivalence classes $[\varphi]$, $\varphi \in \psh(X, \theta)$, are called \emph{singularity types}.

A $\theta$-psh function $\varphi$ has \emph{minimal singularity type} if it is less singular than any other $\theta$-psh function. Such $\theta$-psh functions with minimal singularity type always exist, one can consider for example $V_\theta:=\sup \{ \varphi \ \theta\text{-psh},\, \varphi\le 0\text{ on } X \}$. Trivially, a $\theta$-psh function with minimal singularity type is locally bounded in $\Amp(\theta)$. It follows from \cite[Theorem~1.1]{DNT23} that $V_{\theta}$ is $C^{1, \bar{1}}$ in the ample locus ${\rm Amp}(\theta)$.

Given
\[\theta^1+{\rm dd}^c\varphi_1, \ \dots, \ \theta^p+{\rm dd}^c \varphi_p\] positive $(1,1)$-currents, where $\theta^j$ are closed smooth real $(1,1)$-forms, following the construction of Bedford--Taylor \cite{BT87} in the local setting, it has been shown in~\cite{BEGZ10} that the sequence of currents
\[
{\bf 1}_{\bigcap_j\{\varphi_j>V_{\theta_j}-k\}}\big(\theta^1+{\rm dd}^c \max(\varphi_1, V_{\theta_1}-k)\big)\wedge\dots\wedge (\theta^p+{\rm dd}^c\max(\varphi_p, V_{\theta_p}-k))
\]
is non-decreasing in $k$ and converges weakly to the so called \emph{non-pluripolar product}
\[
\big\langle \theta^1_{\varphi_1 } \wedge\dots\wedge\theta^p_{\varphi_p}\big\rangle .
\]
In the following, with a slight abuse of notation, we will denote the non-pluripolar product simply by $\theta^1_{\varphi_1 } \wedge\dots\wedge\theta^p_{\varphi_p}$.
When $p =n$, the resulting positive $(n,n)$-current is a Borel measure that does not charge pluripolar sets. Pluripolar sets are Borel measurable sets that are locally contained in the $-\infty$ locus of psh functions. As a consequence of \cite[Corollary 2.11]{BBGZ13} for any pluripolar set $A$, there exists $\psi\in \psh(X,\theta)$ such that $A\subset \{\psi=-\infty\}$.

For a $\theta$-psh function $\varphi$, the non-pluripolar product $\theta_\varphi^n$ is said to be the \emph{non-pluripolar Monge--Amp{\`e}re measure} of $\varphi$.

The volume of a big class $\{ \theta\}$ is defined by
\[
{\rm Vol}(\theta):= \int_{{\rm Amp}(\theta)} \theta_{V_\theta}^n.
\]
For notational convenience, we simply write ${\rm Vol}(\theta)$, but keeping in mind that the volume is a~cohomological constant. Note that ${\rm Vol}(\theta)>0$ as $\{\theta\}$ is big (cf.\ \cite{BEGZ10}).

A $\theta$-psh function $\varphi$ is said to have \emph{full Monge--Amp\`ere mass} if
\[
\int_X \theta_\varphi^n={\rm Vol}(\theta),
\]
and we then write $\varphi\in \mathcal{E}(X,\theta)$. By \cite[Theorem 1.16]{BEGZ10}, the set $\mathcal{E}(X,\theta)$ strictly contains the set of $\theta$-psh functions with minimal singularity type.

An important property of the non-pluripolar product is that it is local with respect to the plurifine topology (see \cite[Corollary~4.3]{BT87}, \cite[Section~1.2]{BEGZ10}).
This topology is the coarsest such that all qpsh functions on $X$ are continuous. For convenience, we record the following version of this result for later use.

\begin{Lemma} \label{lem: plurifine}
Fix closed smooth real big $(1,1)$-forms $\theta^1,\dots,\theta^n$. Assume that $\varphi_j$, $\psi_j$, $j=1,\dots,n$ are $\theta^j$-psh functions such that $\varphi_j =\psi_j$ on $U$, an open set in the plurifine topology. Then
\[
\id_{U} \theta^1_{\varphi_1} \wedge \dots \wedge \theta^n_{\varphi_n} = \id_{U} \theta^1_{\psi_1} \wedge\dots \wedge \theta^n_{\psi_n}.
\]
\end{Lemma}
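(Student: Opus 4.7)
The plan is to reduce to Bedford--Taylor's plurifine locality for classical Monge--Amp\`ere products of bounded psh functions, via the truncation used to define the non-pluripolar product. Set $\varphi_j^k:=\max(\varphi_j,V_{\theta^j}-k)$ and $\psi_j^k:=\max(\psi_j,V_{\theta^j}-k)$; these are $\theta^j$-psh with the same singularity type as $V_{\theta^j}$, hence locally bounded on $\Amp(\theta^j)$.

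By the construction of the non-pluripolar product recalled above, $\theta^1_{\varphi_1}\wedge\dots\wedge\theta^n_{\varphi_n}$ is the weak increasing limit in $k$ of $\id_{O_k(\varphi)}\theta^1_{\varphi_1^k}\wedge\dots\wedge\theta^n_{\varphi_n^k}$, where $O_k(\varphi):=\bigcap_j\{\varphi_j>V_{\theta^j}-k\}$ is plurifine open, and analogously for $\psi$. Since $\varphi_j=\psi_j$ on the plurifine open set $U$, one immediately obtains $\varphi_j^k=\psi_j^k$ on $U$ and $U\cap O_k(\varphi)=U\cap O_k(\psi)$. Thus it suffices to verify, for each fixed $k$, the equality $\theta^1_{\varphi_1^k}\wedge\dots\wedge\theta^n_{\varphi_n^k}=\theta^1_{\psi_1^k}\wedge\dots\wedge\theta^n_{\psi_n^k}$ as measures on the plurifine open set $U\cap\bigcap_j\Amp(\theta^j)$.

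To see this, work locally: write $\theta^j={\rm dd}^c h^j$ on a ball $B\Subset\Amp(\theta^j)$ with $h^j$ smooth; then $h^j+\varphi_j^k$ and $h^j+\psi_j^k$ are bounded psh on $B$ and agree on the plurifine open set $B\cap U$. Bedford--Taylor's plurifine locality for mixed Monge--Amp\`ere currents of locally bounded psh functions \cite[Corollary~4.3]{BT87} yields equality of the corresponding currents there; patching over a locally finite cover of $\bigcap_j\Amp(\theta^j)$ by such balls gives the desired equality, as on this open set the non-pluripolar product of the $\theta^j_{\varphi_j^k}$ coincides with the classical Bedford--Taylor product (the truncation procedure stabilizes).

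Finally, $X\sm\bigcap_j\Amp(\theta^j)\sub\bigcup_j E_{\rm nK}(\theta^j)$ is pluripolar, since each $\theta^j$ is big and thus (by Demailly regularization) admits a K\"ahler current with analytic singularities whose singular set is exactly $E_{\rm nK}(\theta^j)$. As the non-pluripolar product charges no pluripolar set, multiplying by $\id_U$ and letting $k\to\infty$ by monotone convergence yields the lemma. The main obstacle is bridging the local Bedford--Taylor statement (bounded potentials on a domain) and the global non-pluripolar product on $X$: this is resolved by restricting to the ample loci, where the truncations are uniformly bounded and the classical theory applies, and by discarding the pluripolar complement, which is invisible to the measures involved.
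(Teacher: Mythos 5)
Your proof is correct and takes exactly the route the paper intends: the paper states this lemma without proof, merely citing Bedford--Taylor (Corollary~4.3 of their paper) and Section~1.2 of Boucksom--Eyssidieux--Guedj--Zeriahi, and your argument is precisely the standard reduction to local Bedford--Taylor plurifine locality via the canonical truncations $\max(\varphi_j,V_{\theta^j}-k)$ used to define the non-pluripolar product, together with the observation that the complement of the joint ample locus is pluripolar and hence not charged. No gaps.
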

Lemma \ref{lem: plurifine} will be referred to as the \emph{plurifine locality property}. We will often work with sets of the form $\{u<v\}$, where $u$, $v$ are quasi-psh functions. These are always open in the plurifine topology.

\subsection{Envelopes and model potentials}
Giving $f$ an upper semi-continuous function (usc for short) on $X$, we define the envelope of $f$ in the class ${\rm PSH}(X,\theta)$ by
\[
P_{\theta}(f) := \sup \{u\in {\rm PSH}(X,\theta) \colon u\leq f\},
\]
with the convention that $\sup\varnothing =-\infty$. Observe that $P_{\theta}(f)\in {\rm PSH}(X,\theta)$ if and only if there exists some $u\in {\rm PSH}(X,\theta)$ lying below $f$. Note also that $V_{\theta}=P_{\theta}(0)$, and that $P_{\theta}(f+C)= P_{\theta}(f)+C$ for any constant $C$.

 In the particular case $f=\min(\psi,\phi)$ for $\psi$, $\phi$ usc functions, we denote the envelope as
$ P_\theta(\psi,\phi):=P_\theta(\min(\psi,\phi))$.
We observe that $ P_\theta(\psi,\phi)= P_\theta(P_\theta(\psi),P_\theta(\phi))$, so without loss of generality, we can assume $\psi$, $\phi$ are two $\theta$-psh functions.

Starting from the ``rooftop envelope'' $ P_\theta(\psi,\phi)$, we introduce
\[P_\theta[\psi](\phi) := \big(\lim_{C \to \infty}P_\theta(\psi+C,\phi)\big)^*.\]
It is easy to see that $P_\theta[\psi](\phi)$ only depends on the singularity type of $\psi$. When $\phi = V_\theta$, we will simply write $P_\theta[\psi]:=P_\theta[\psi](V_\theta)$, and we refer to this potential as the \emph{envelope of the singularity type} $[\psi]$.

Since $\psi - \sup_X \psi \leq P_\theta[\psi]$, we have that $[\psi] \leq [P_\theta[\psi]]$ and typically equality does not happen. When $[\psi] = [P_\theta[\psi]]$, we say that $\psi$ has \emph{model singularity type}. In the (more particular) case~${\psi = P_\theta[\psi]}$, we say that $\psi$ is a \emph{model potential}.

It is worth to mention that given any $\theta$-psh function $\psi$ with positive mass, the associated envelope $P_\theta[\psi]$ is in fact a model potential \cite[Theorem~3.14]{DDL6}.
Also, we recall that by \cite[Remark~3.4]{DDL6}, we know that $\int_X \theta_\psi^n = \int_X \theta_{P_\theta[\psi]}^n$.

From now on (unless otherwise stated), $\phi$ will denote a model potential with positive mass, i.e., $\int_X \theta_\phi^n>0$. We say that a $\theta$-psh function $\varphi$ has $\phi$-relative minimal singularities if $\varphi \simeq \phi$.

\begin{Definition}
Given a model potential $\phi$, the relative full mass class $\mathcal{E}(X,\theta,\phi)$ is the set of all $\theta$-psh functions $u$ such that $u$ is more singular than $\phi$ and $\int_X \theta_u^n=\int_X \theta_{\phi}^n$.
\end{Definition}

\begin{Proposition}\label{mass}
Assume $\{ \theta \}$ is a big cohomology class, and $0 \leq m \leq V$ with $V = \int_X \theta_{V_\theta}^n$. Then there exists a model potential $ u \in \PSH(X,\theta)$ such that $\int_X \theta_u^n = m$.

Moreover, $V_\theta$ is the only model potential with Monge--Amp\`ere mass~$V$, whereas there are infinitely many model potentials with Monge--Amp\`ere mass $m < V$.
\end{Proposition}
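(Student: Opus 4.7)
The plan is to treat the three assertions of the proposition separately: uniqueness at $m = V$, existence for every $m \in [0,V]$, and non-uniqueness when $m < V$.

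\emph{Uniqueness at $m = V$.} Let $u$ be a model potential with $\int_X \theta_u^n = V$. Unfolding the definition $u = P_\theta[u](V_\theta) = \bigl(\lim_{C \to +\infty} P_\theta(u+C, V_\theta)\bigr)^*$, each approximant $P_\theta(u+C, V_\theta)$ is, by construction, a supremum of $\theta$-psh functions dominated by $V_\theta$, hence $u \le V_\theta$. Combined with the mass equality $\int_X \theta_u^n = V = \int_X \theta_{V_\theta}^n$, this places $u$ in the full mass class $\mathcal{E}(X, \theta, V_\theta)$. The principle from \cite{DDL6} that the model potential associated to any element of $\mathcal{E}(X, \theta, \phi)$ is $\phi$ itself then forces $P_\theta[u] = V_\theta$, and since $u$ is its own model, $u = P_\theta[u] = V_\theta$.

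\emph{Existence for $m \in [0,V]$.} The strategy is to build a monotone one-parameter family of $\theta$-psh functions whose Monge--Amp\`ere masses cover $[0,V]$, and then apply the envelope operator, which preserves mass by \cite[Remark 3.4]{DDL6} and produces model potentials by \cite[Theorem 3.14]{DDL6}. Starting from a potential $\psi_0 \in \PSH(X, \theta)$ with $\int_X \theta_{\psi_0}^n = 0$ and $\psi_0 \le V_\theta$, set $\psi_t := \max(\psi_0, V_\theta - t)$ for $t \ge 0$. This family is monotone decreasing in $t$, interpolating from $V_\theta$ at $t=0$ (mass $V$) down to $\psi_0$ as $t \to \infty$ (mass $0$). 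Plurifine locality (Lemma \ref{lem: plurifine}) together with continuity of the non-pluripolar product along monotone sequences yields that $t \mapsto \int_X \theta_{\psi_t}^n$ is continuous on $[0, +\infty)$, so the intermediate value theorem supplies a $\psi_t$ of prescribed mass $m \in [0,V]$; then $P_\theta[\psi_t]$ is the sought model potential.

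\emph{Non-uniqueness for $m < V$ and main obstacle.} For $m < V$, one runs the construction with different choices of $\psi_0$ — for instance, with singularities concentrated on disjoint analytic subsets of $X$ — producing model potentials of the same mass but different singularity types, hence distinct potentials (model potentials being uniquely determined by their singularity type). The main technical obstacle lies in producing an initial $\psi_0$ with vanishing non-pluripolar mass, which requires exploiting the bigness of $\{\theta\}$ to construct $\theta$-psh potentials with sufficiently large Lelong numbers (via a Kodaira/Zariski-type decomposition); the companion continuity of $t \mapsto \int_X \theta_{\psi_t}^n$ along the monotone family is then a routine consequence of the monotone continuity of the non-pluripolar Monge--Amp\`ere measure in the relative big setting.
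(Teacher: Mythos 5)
There is a genuine gap in your existence argument, and it is fatal as written. The family $\psi_t := \max(\psi_0, V_\theta - t)$ does not interpolate the mass between $V$ and $0$: for every \emph{finite} $t$ one has $\psi_t \geq V_\theta - t$, so $\psi_t$ has minimal singularity type, and hence $\int_X \theta_{\psi_t}^n = \int_X \theta_{V_\theta}^n = V$ for all $t \in [0,+\infty)$. The mass only drops (to $0$) in the decreasing limit $t \to +\infty$; the non-pluripolar Monge--Amp\`ere mass is \emph{not} continuous along decreasing sequences --- indeed the whole point of the class $\mathcal{E}(X,\theta)$ is that the canonical approximants $\max(u, V_\theta - j)$ all have mass $V$ while $u$ itself may not. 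So your function $t \mapsto \int_X \theta_{\psi_t}^n$ is constant equal to $V$ and the intermediate value theorem gives you nothing. The correct family, which is what the paper uses, is the \emph{convex combination} $u_t := t\,\psi_0 + (1-t)V_\theta$: then $\theta + {\rm dd}^c u_t = t\,\theta_{\psi_0} + (1-t)\theta_{V_\theta}$ and multilinearity of the non-pluripolar product makes $\int_X \theta_{u_t}^n$ a polynomial in $t$, hence continuous, with values $V$ at $t=0$ and $0$ at $t=1$.

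Two further points. First, you defer the construction of a zero-mass potential $\psi_0$ to a ``Kodaira/Zariski-type decomposition''; this is in fact the bulk of the paper's proof (for $n\geq 2$ one blows up a point $p$, lets $a$ be the threshold at which $\{\pi^*\theta - t[E]\}$ stops being big --- finite because the pseudoeffective cone contains no lines --- and pushes the zero-volume current $\gamma_{V_\gamma} + a[E]$ back down to $X$; the case $n=1$ needs a separate argument with $\omega + {\rm dd}^c\psi_a = V\delta_a$), so it cannot be waved away as routine. Second, your uniqueness argument at $m=V$ matches the paper's, and your non-uniqueness idea (move the singular point) is the right one, but to distinguish the resulting model potentials you must know that $P_\theta[u_t]$ \emph{retains} a positive Lelong number at $p$; this is the content of \cite[Lemma~5.1]{DDL6} combined with \cite[Corollary~1.18]{BouThesis} and should be cited rather than asserted.
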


\begin{proof} For the first statement, we need to treat the $1$-dimensional case separately. When $n=1$, the big cohomology class $\{\theta\}$ is actually K\"ahler, hence we can work with a K\"ahler form $\omega\in \{\theta\}$ with $\int_X \omega = V$ as a reference form.
Fix $a\in X$, then the measures $\omega$ and $V \delta_a$ define cohomologus closed positive $(1,1)$-currents. Hence, there exists $\psi_a \in \SH(X,\omega)$ such that $\omega + {\rm dd}^c \psi_a = V \delta_a$ and~$\sup_X \psi_a = 0$.
Note that on a fixed local holomorphic coordinate chart centered in $a$ we have that~$\psi_a $ writes as the sum of $ V \log(|z|)$ and a smooth function. Thus, $ \psi_a(a) = - \infty$. Observe moreover that the non-pluripolar product $( \omega + {\rm dd}^c \psi_a)$ is given by the product of the current~${\omega + {\rm dd}^c \psi_a}$ with the characteristic function $ {\bf 1}_{ \{ \psi_a > - \infty \} }$. This implies that $\omega + {\rm dd}^c \psi_a$ has mass zero. Since for $t \in [0,1]$, we have $ \omega +t {\rm dd}^c \psi_a \geq (1-t) \omega \geq 0$, the function
\[m(t):= \int_X (\omega+t{\rm dd}^c\psi_a)\in [0, V]\]
 defined on $[0,1]$ is continuous and $m(0)=V$, $m(1)=0$, we arrive at the conclusion.

Assume now $n\geq 2$. Let us consider $\pi \colon Y \to X$ the blow up of $X$ at a point $p \in X$ with exceptional divisor $E$. Let $[E]$ be the current of integration along $E$.

Note that the cohomology class $\{ \pi^*\theta \} $ is big and so is $\{\pi^*(\theta) - t [E]\}$ for $ |t| < \varepsilon$ with~$\varepsilon$ small enough.
Observe that the pseudoeffective cone of a compact K\"ahler manifold does not contain lines. In fact, let $\alpha $ and $\gamma$ be smooth closed real $(1,1)$-forms and assume that
${\{ \alpha \} + t \{ \gamma \}}$ is a pseudoeffective class for all
$t \in \mathbb{R}$.
Then for $k \in { \N}$, \smash{$ \frac{ \{ \alpha \} }{k}+ \{ \gamma \} = \frac{1}{k}( \{ \alpha \}+ k\{ \gamma \})$} and \smash{$\frac{ \{\alpha \}}{k}+ \{-\gamma \} = \frac{1}{k}( \{ \alpha \} + k\{-\gamma \})$} are pseudoeffective, but the pseudoeffective cone is closed, then letting $k\rightarrow +\infty$, we get that both
$\{ \gamma \}$ and $\{ - \gamma \} $ are pseudoeffective classes, meaning that~${\{ \gamma \} = 0}$.

Therefore, for large $t$, $\{\pi^*(\theta) - t [E] \}$ is no longer big. Let
\[a = \sup \{ t \in \mathbb{R}, \ t > 0 \colon \mbox{ the class $\{ \pi^*(\theta) - t[E] \}$ is big} \}.
\]
Then $ 0< a < +\infty $ and
$\{\pi^*\theta - a [E]\}$ is pseudoeffective but not big.
In other words, if $\gamma$ is a~smooth form representing
$\{ \pi^*\theta - a [E] \}$, then
\[\int_Y (\gamma+{\rm dd}^c V_\gamma)^n = 0. \]

Now, $\gamma+{\rm dd}^c V_\gamma + a[E]$ is a positive $(1,1)$-current and, since the non-pluripolar product does not put mass of analytic subsets, we also get
\[\int_Y ( \gamma +{\rm dd}^c V_\gamma + a[E])^n = 0. \]

On the other hand, $\{\gamma+{\rm dd}^c V_\gamma + a[E]\}= \{ \pi^*\theta \}$, hence by \cite[Proposition 1.2.4]{BouThesis}, there exists~${u_1\in \PSH(X,\theta)}$ such that
$\pi^*\theta_{u_1} = \gamma +{\rm dd}^c V_\gamma + a[E]$.
In particular, $ 0=\int_Y (\pi^*\theta_{u_1})^n =\int_X \theta_{u_1}^n $.
Therefore, since the function
\[m(t) := \int_X ( \theta + (t u_1 + (1-t) V_\theta))^n \in [0, V] \]
defined in $[0,1]$ is continuous, $m(0)=V$, $m(1)=0$, then for $0 \leq m \leq V$ there exists $t_0\in [0,1]$ such that $\int_X (\theta + {\rm dd}^c ( t_0 u_1 + (1-t_0) V_\theta)^n = m$. On the other hand, by \cite[Remark 3.4]{DDL6}, $u_t := tu_1 + (1-t)V_\theta$ and $P[u_t]$ have the same mass for every $t \in [0,1]$.

For the last statement, we observe that, since $P[u_t] \leq V_\theta$ for all $t \in [0,1]$, by \cite[Theorem~3.14]{DDL6}, $V_\theta$~is the only model potential with Monge--Amp\'ere mass $V$. Now, by~\cite[Corollary~1.18]{BouThesis}, the Lelong number of the function~$u_1$ at~$p$ is strictly positive. Therefore, for $0 < t \leq 1$ also the Lelong number of $u_t$ is strictly positive at~$p$. By \cite[Lemma~5.1]{DDL6}, so is the Lelong number of~$P[u_t]$. Therefore, fixing $0 \leq m < V$ and varying the point $p$, we obtain infinitely many distinct model potentials all of the same Monge--Amp\`ere mass $m$.
\end{proof}

As pointed out in \cite{GZ07}, and then in the relative setting in \cite{Gup23}, it is natural to consider weighted subspaces of $\mathcal E(X,\theta,\phi)$.

A \emph{weight} is a continuous strictly increasing function $\chi\colon [0,+\infty) \rightarrow [0,+\infty)$ such that $\chi(0)=0$ and $\chi(+\infty)=+\infty$. Denote by $\chi^{-1}$ its inverse function, i.e., such that $\chi\big(\chi^{-1}(t)\big)= t$ for all~${t\geq 0}$.

We fix $\phi$ a model potential and we let $\mathcal{E}_{\chi}(X,\theta,\phi)$ denote the set of all $u\in \mathcal{E}(X,\theta,\phi)$ such that
\[
E_{\chi}(u,\phi):=\int_X \chi(|u-\phi|) \theta_u^n <\infty.
\]
When $\phi=V_{\theta}$, we denote $\mathcal{E}(X,\theta)=\mathcal{E}(X,\theta,V_{\theta})$, $\mathcal{E}_{\chi}(X,\theta)=\mathcal{E}_{\chi}(X,\theta,V_{\theta}$) and $E_{\chi}(u)=E_{\chi}(u,V_{\theta})$.

Compared to \cite{GZ07}, we have changed the sign of the weight, but the weighted classes are the same.

Also, in the special case $\chi(t)=t^p$, $p>0$, we simply denote the relative energy class with~${\mathcal{E}^p(X,\theta,\phi)}$
and the corresponding relative energy $E_p(u,\phi)$.

These weighted Monge--Amp\`ere energy classes covers the all class $\mathcal{E}$, i.e.,

\begin{Lemma}\label{energy union} We have that
$\mathcal{E}(X,\theta,\phi)=\bigcup_{\chi}\mathcal{E}_\chi(X,\theta,\phi)$.
\end{Lemma}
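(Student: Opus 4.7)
The inclusion $\bigcup_\chi \mathcal{E}_\chi(X,\theta,\phi) \subset \mathcal{E}(X,\theta,\phi)$ is immediate from the definitions, so the real content is the reverse inclusion. My plan is to manufacture the weight $\chi$ directly from the distribution function of $|u-\phi|$ with respect to the finite measure $\theta_u^n$, a standard measure-theoretic device.

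Fix $u\in \mathcal{E}(X,\theta,\phi)$ and set $f:=|u-\phi|$. Since $u$ is more singular than $\phi$, there exists $C_0\geq 0$ with $u\leq \phi+C_0$, so $f\leq (\phi-u)_+ +C_0$. Crucially, $\theta_u^n$ is non-pluripolar and has finite mass $m=\int_X \theta_\phi^n$, so it does not charge the pluripolar set $\{u=-\infty\}\cup\{\phi=-\infty\}$. Hence $f<+\infty$ $\theta_u^n$-a.e., and the non-increasing distribution function
\[
F(t):=\theta_u^n(\{f>t\}), \qquad t\geq 0,
\]
satisfies $F(0)\leq m$ and $F(t)\to 0$ as $t\to +\infty$.

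Next, I pick a strictly increasing sequence $0=t_0<t_1<t_2<\cdots$ with $t_k\to +\infty$ and $F(t_k)\leq 2^{-k}$ for every $k\geq 0$, and define $\chi\colon [0,+\infty)\to [0,+\infty)$ to be the continuous, piecewise linear function with $\chi(t_k)=k$. By construction $\chi$ is strictly increasing, $\chi(0)=0$, $\chi(+\infty)=+\infty$, so it is a weight in the sense of the paper. A layer-cake estimate then gives
\[
\int_X \chi(f)\, \theta_u^n \;\leq\; \sum_{k\geq 0}(k+1)\,\theta_u^n(\{t_k<f\leq t_{k+1}\}) \;\leq\; \sum_{k\geq 0}(k+1) F(t_k) \;\leq\; \sum_{k\geq 0}(k+1) 2^{-k} \;<\; +\infty,
\]
which shows $u\in \mathcal{E}_\chi(X,\theta,\phi)$ and closes the argument.

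There is essentially no obstacle; the only substantive input is the non-pluripolarity of $\theta_u^n$, which guarantees that $f$ is $\theta_u^n$-a.e.\ finite and thus makes the distribution function construction available. Everything else is an elementary exhaustion argument.
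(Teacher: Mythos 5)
Your proof is correct, and it rests on the same underlying idea as the paper's: read off the required weight from the decay of the distribution function $t\mapsto \theta_u^n(\{\phi-u>t\})$, which tends to $0$ because the non-pluripolar product does not charge $\{u=-\infty\}$. The execution differs, though. The paper builds $\chi$ by a continuous recipe, $\chi(t)=\int_0^t h(s)/\mu(\{u<\phi-s\})\,{\rm d}s$ for an auxiliary $h>0$ with $\int h<\infty$ and $\int h\cdot\mu(\{u<\phi-s\})^{-1}=\infty$, so that the layer-cake formula collapses $\int_X\chi(\phi-u)\,{\rm d}\mu$ to $\int_1^\infty h(s)\,{\rm d}s$; this forces them to worry about the a.e.\ differentiability of $\chi$ and the countable discontinuity set of the distribution function. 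Your discrete version --- choosing $t_k$ with $F(t_k)\le 2^{-k}$ and interpolating piecewise linearly with $\chi(t_k)=k$ --- reaches the same conclusion with none of those technicalities, at the cost of producing a less explicit weight. One small point worth making explicit: on $\{f\le t_0\}=\{f=0\}$ the integrand vanishes and $\{f=+\infty\}$ is $\theta_u^n$-negligible, so your sum over the annuli $\{t_k<f\le t_{k+1}\}$ really does account for the whole integral. As written, the argument is complete.
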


\begin{proof}We give here the proof for reader's convenience.
The inclusion $ \supseteq $ clearly follows from the definition. In order to prove the other inclusion we need to show that, given $\varphi \in \mathcal{E}(X,\theta,\phi)$ normalized with $\sup_X \varphi =-1$, there exists a weight function $\chi$ such that $\int_X \chi (\phi-\varphi) {\rm d}\mu < +\infty$, where $\mu:=\theta_\varphi^n$. Also, for convenience we normalize~$\mu$ to have $\mu(X)=1$.

Set \smash{$\psi(t):= \frac{1}{\mu(\{\varphi <\phi -t\})}$}, and observe that $\psi\colon\R^+\rightarrow \R^+$ is an increasing function such that $\psi(0)=1$ and $\lim_{t\rightarrow +\infty} \psi(t) =+\infty$. The latter property follows from the definition of the non-pluripolar product. We also note that, since $t\rightarrow \mu(\{\varphi <\phi -t\})$ is a decreasing function, its discontinuity locus is at most countable. In particular, $\psi$ is continuous for all $t\in \R^+\setminus J$.

We now consider $h\colon \R^+\rightarrow \R^+$ such that $h> 0$ satisfying:
\[{\rm (a)} \ \int_1^{+\infty} h(s) {\rm d}s <+\infty, \qquad {\rm (b)} \ \int_1^{+\infty} {h(s)} \psi(s) {\rm d}s =+\infty.\]
 Such a function $h$ exists as we show below. We choose a sequence of distinct points $\{t_k\}_{k\in \N}\in \R^+$ such that $\psi(t_k) \geq k$. Such a sequence exists since $\psi$ is increasing and $\psi\rightarrow +\infty $ as $t$ goes to~$+\infty$. Also, we can take $t_1=1$ since $\psi(1)\geq \psi(0)=1$. We then set $I_k:=[t_{k}, t_{k+1}[$ and we define $h$ to be the piece-wise continuous function constant on each $I_k\colon h|_{I_k}= k^{-2} (t_{k+1}-t_k)^{-1} $. Then
 \[\int_1^{+\infty} h(s) {\rm d}s= \sum_{k\geq 1} \frac{1}{k^2} <+\infty \qquad \text{and}\qquad \int_1^{+\infty}h(s)\psi(s) {\rm d}s\geq \sum_{k\geq 1}\frac{1}{k} =+\infty.\]

We then define the function $\chi\colon \R^+\rightarrow \R^+$ as \smash{$\chi(t):= \int_0^ t\frac{h(s)}{\mu(\{\varphi <\phi -s\})} {\rm d}s$}. We infer that the fundamental theorem of calculus holds almost everywhere. Indeed, for $t\notin J\cup \{t_k\}_k$, given $\varepsilon>0$ (small enough), the mean value theorem ensures that there exists $c_\varepsilon \in (t, t+\varepsilon)$ such that
\[\frac{\chi (t+\varepsilon)-\chi (t)}{\varepsilon} = \frac{1}{\varepsilon} \int_t^{t+\varepsilon} h(s) \psi(s) {\rm d}s = h(c_\varepsilon) \psi(c_\varepsilon).\]
Sending $\varepsilon\to 0$, and using the continuity of $h\cdot \psi$ on $\R^+\setminus (J\cup \{t_k\}_k)$, we get that $(\chi'(t))^+=h(t)\psi(t)$.

The same arguments work for left derivative, giving $(\chi'(t))^+=(\chi'(t))^-=\chi'(t)=h(t)\psi(t)$ almost everywhere, as we wanted.
By definition, $\chi$ is strictly increasing (since $h> 0$) and $\chi(+\infty)=+\infty$, thanks to condition (b). Moreover, setting $\varepsilon:=\chi (1)>0$, we have
\begin{align*}
\int_X \chi (\phi-\varphi) {\rm d}\mu = & \int_{\varepsilon}^\infty \mu(\{ \chi (\phi-\varphi) >t\}) {\rm d}t=\int_{\varepsilon}^\infty \mu(\{ \phi-\varphi >\chi^{-1}(t)\}) {\rm d}t\\
=& \int_{1}^\infty \chi'(s) \mu(\{ \phi-\varphi >s\}) {\rm d}s= \int_{1}^\infty h(s) {\rm d}s <+\infty,
\end{align*}
thanks to condition (a). This concludes the proof.
\end{proof}

We conclude here with a simple lemma needed in the following that we could not find in this precise form in the literature:

\begin{Lemma}\label{energy increases}
 Let $u\in \mathcal{E}_{\chi}(X,\theta,\phi)$ and $u_j:=\max (u,\phi -j)$. Then $u_j \in \mathcal{E}_{\chi}(X,\theta,\phi)$ and $E_{\chi}(u_j,\phi) \nearrow E_{\chi}(u,\phi)$.
\end{Lemma}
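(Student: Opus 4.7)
The plan is to derive an explicit formula for $E_\chi(u_j,\phi)$ via plurifine locality, and then read off both monotonicity and convergence almost mechanically from it. The technical heart is the formula; everything else is essentially arithmetic.

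First, since $u$ is more singular than $\phi$ one has $u\leq \phi+C$ for some constant $C$, and $u_j\geq \phi-j$ by construction, so $-j\leq u_j-\phi\leq C$. In particular $u_j$ has $\phi$-relative minimal singularities, which places $u_j$ in $\mathcal{E}(X,\theta,\phi)$ with total Monge--Amp\`ere mass $\int_X \theta_\phi^n$; since $|u_j-\phi|$ is bounded, $E_\chi(u_j,\phi)\leq \chi(j+|C|)\int_X \theta_\phi^n<\infty$, so $u_j\in \mathcal{E}_\chi(X,\theta,\phi)$.

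For the formula, split $X$ along the plurifine-open set $\{u>\phi-j\}$. On this set $u_j=u$, so Lemma~\ref{lem: plurifine} yields $\theta_{u_j}^n=\theta_u^n$ there and $|u_j-\phi|=|u-\phi|$; on the complement $u_j\equiv \phi-j$, so $|u_j-\phi|\equiv j$. Conservation of total mass, $\int_X \theta_{u_j}^n=\int_X \theta_\phi^n=\int_X \theta_u^n$, then forces $\theta_{u_j}^n(\{u\leq \phi-j\})=\theta_u^n(\{u\leq \phi-j\})$, and combining the two pieces gives
\[
E_\chi(u_j,\phi)=\int_{\{u>\phi-j\}} \chi(|u-\phi|)\,\theta_u^n+\chi(j)\,\theta_u^n(\{u\leq \phi-j\}).
\]

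With this formula in hand, monotonicity is immediate: for $k\geq j$ the difference $E_\chi(u_k,\phi)-E_\chi(u_j,\phi)$ decomposes into $\int_{\{\phi-k<u\leq \phi-j\}}[\chi(|u-\phi|)-\chi(j)]\,\theta_u^n\geq 0$ plus $[\chi(k)-\chi(j)]\,\theta_u^n(\{u\leq \phi-k\})\geq 0$, both non-negative since $\chi$ is increasing and $|u-\phi|\geq j$ on the first set. For the limit $j\to\infty$, the first term tends to $E_\chi(u,\phi)$ by monotone convergence, as $\{u>\phi-j\}$ increases to the $\theta_u^n$-conull set $\{u>-\infty\}$; the second term vanishes because $\chi(j)\theta_u^n(\{u\leq \phi-j\})\leq \int_{\{u\leq \phi-j\}}\chi(|u-\phi|)\,\theta_u^n\to 0$, being the tail of the finite integral $E_\chi(u,\phi)$. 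The only subtle point I anticipate is the mass-conservation identity $\theta_{u_j}^n(\{u\leq \phi-j\})=\theta_u^n(\{u\leq \phi-j\})$, which crucially exploits that $u$ and $u_j$ lie in the same relative full mass class $\mathcal{E}(X,\theta,\phi)$.
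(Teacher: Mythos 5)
Your proposal is correct and follows essentially the same route as the paper: plurifine locality on $\{u>\phi-j\}$, equality of total masses to control the complement, the tail estimate $\chi(j)\,\theta_u^n(\{u\leq\phi-j\})\leq\int_{\{u\leq\phi-j\}}\chi(\phi-u)\,\theta_u^n\to 0$, and a split of the integral. Your explicit verification of monotonicity is a small (welcome) addition that the paper leaves implicit.
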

\begin{proof}
 For $j\in \N$, by Lemma \ref{lem: plurifine}, we have $\id_{\{u>\phi -j\}} \theta_{u_j}^n= \id_{\{u>\phi-j\}} \theta_{u}^n$. Since $\int_X \theta_{u}^n=\int_X \theta_{u_j}^n=\int_X \theta_\phi^n$, we obtain
\begin{align*}
0\leq \limsup_{j\to +\infty} \chi(j)\int_{\{u\leq \phi-j\}}\!\theta_{u_j}^n = \limsup_{j\to +\infty} \chi(j)\int_{\{u\leq \phi-j\}}\!\theta_{u}^n
\leq \limsup_{j\to +\infty} \int_{\{u\leq\phi-j\}}\! \chi (\phi-u)\theta_{u}^n =0.
\end{align*}
Now,
\begin{align*}
\int_X \chi (|\phi-u_j|) \theta_{u_j}^n &= \int_{\{ u > \phi-j\}} \chi (|\phi-u_j|) \theta_{u_j}^n+ \int_{\{ u \leq \phi-j\}} \chi (\phi-u_j) \theta_{u_j}^n \\
 &= \int_{\{ u > \phi-j\}} \chi (|\phi-u|) \theta_{u}^n+ \int_{\{ u \leq \phi-j\}} \chi (\phi-u_j) \theta_{u_j}^n.
\end{align*}
The conclusion follows letting $j\rightarrow +\infty$.
\end{proof}

\section{Entropy}
 We recall that given two positive probability measures $\mu$, $\nu$, the relative entropy $\Ent(\mu, \nu)$ is defined as
\[
\Ent(\mu, \nu) := \int_X \log\left (\frac{{\rm d}\mu}{{\rm d}\nu}\right) {\rm d}\mu,
\]
if $\mu$ is absolutely continuous with respect to $\nu$, and $+\infty$ otherwise.

\begin{Remark}
Let $\mu$, $\nu$ positive probability measure with $\mu := f \nu $ absolutely continuous with respect to $\nu$. Then $\Ent(\mu,\nu) < + \infty$ if and only if $f\log f \in L^1(X,\nu)$, in fact if $f < 1$, $f \log f$ is bounded, and if $f \geq 1$, $f \log f \geq 0$. \label{L^1}
\end{Remark}

Once and for all, we normalize the K\"ahler form $\omega$ such that $\int_X \omega^n=1$. We consider $u\in \psh(X, \theta)$ such that $\theta_u^n=f \omega^n$, $0 \leq f$ and $m_u:=\int_X \theta_u^n >0$. Then $ u\in \mathcal{E}(X, \theta, \phi)$ for the model potential $\phi=P_\theta[u]$ \cite[Theorem 1.3]{DDL2}, and $m_u^{-1}\theta_u^n$ is a probability measure. We then define the $\theta$\emph{-entropy} of $u$ as
\begin{align}
 {\rm Ent}_\theta(u)&:= {\rm Ent}\big( m_u^{-1}\theta_u^n, {\omega^n}\big)=\int_X \log \left( \frac{m_u^{-1}\theta_u^n}{\omega^n} \right) m_u^{-1}\theta_u^n\nonumber\\
 &= m_u^{-1} \int_X f\log f \omega^n-\log m_u.\label{eq: entropy}
 \end{align}
By Jensen’s inequality, we have ${\rm Ent}_\theta(u)\geq 0$. Also, observe that the definition of the $\theta$-entropy does depend on the chosen volume form $\omega^n$ but its finiteness does not.

Also, the expression in~\eqref{eq: entropy} coincides with the definition of entropy in \cite{DGL20} when $P[u]=V_\theta$, i.e., when $u\in \mathcal{E}(X, \theta)$. The definition in~\eqref{eq: entropy} is indeed a generalisation that allows to consider any $\theta$-psh function not necessarily of full mass.

More generally, given two $\theta$-psh functions $u$, $v$ with $m_u, m_v>0$ we define
\[{\rm Ent}_\theta(u,v) :={\rm Ent}\big( m_u^{-1}\theta_u^n, m_v^{-1}\theta_v^n\big). \]
Also, if no confusion can arise, we simply write ${\rm Ent} (u) $ and ${\rm Ent} (u,v) $.

\begin{Definition}\label{def: entropy}
 We say that $u\in \psh(X, \theta)$ with $m_u>0$ has \emph{finite $\theta$-entropy} if ${\rm Ent}_\theta(u)<+\infty$. We denote by ${\rm Ent}(X, \theta)$ the set of all $\theta$-psh functions having finite $\theta$-entropy.
\end{Definition}
 By \eqref{eq: entropy}, ${\rm Ent}_\theta(u)<+\infty$ if and only if $\theta_u^n=f\omega^n$ and $\int_X f\log f\omega^n<+\infty$ or equivalently $\int_X (f+1)\log (f+1)\omega^n<+\infty$.

We start with the following observations ensuring that the set ${\rm Ent}(X, \theta)$ is not empty.

\begin{Proposition}\label{env} Let $\phi$ be a model potential with $\int_X \theta_\phi^n >0$. The following hold:
\begin{itemize}\itemsep=0pt
 \item[$(i)$] $\Ent_\theta(\phi)<+\infty$.

\item[$(ii)$] Let $\varphi\in \mathcal{E}(X, \theta, \phi)$ such that $\theta_\varphi^n =f \omega^n$ with $f\in L^p(X)$, $p>1$. Then ${\rm Ent}_\theta(\varphi)<+\infty$.

\item[$(iii)$] Let $\varphi\in \psh(X, \theta)$ have analytic singularities with smooth remainder such that $m_\varphi>0$. Then ${\rm Ent}_\theta(\varphi)<+\infty$.

\item[$(iv)$] Let $f_1, \dots,f_k$ be $C^{1,\bar{1}}$ functions on $X$, i.e., bounded functions with bounded $($distributional$)$ Laplacian. Then
 ${\rm Ent}_\theta(P_{\theta}(f,\dots,f_k))<+\infty $.
\end{itemize}
\end{Proposition}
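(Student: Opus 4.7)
The plan is to reduce each part to an $L^p$ bound on the density of the Monge--Amp\`ere measure. Indeed, if $\theta_u^n = f\omega^n$ with $f \in L^p(\omega^n)$ for some $p > 1$, then the elementary inequality $x\log x \leq c_p\, x^p + c_p$ valid for $x \geq 0$ gives $f\log f \in L^1(\omega^n)$; combined with Remark~\ref{L^1} and $m_u > 0$, this yields $\Ent_\theta(u) < +\infty$.

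Part (ii) is then immediate: the assumption supplies the $L^p$ bound directly, and $m_\varphi > 0$ since $\varphi \in \mathcal{E}(X,\theta,\phi)$ with $\int_X \theta_\phi^n > 0$. For part (iv), I would invoke the extension to big classes of the $C^{1,\bar{1}}$ regularity for envelopes of $C^{1,\bar{1}}$ obstacles, in the spirit of \cite[Theorem~1.1]{DNT23}. Since $\min(f_1,\dots,f_k)$ is Lipschitz with distributional Laplacian bounded from below, the envelope $P_\theta(f_1,\dots,f_k)$ is $C^{1,\bar{1}}$ on $\Amp(\theta)$; hence $\theta_{P_\theta(f_1,\dots,f_k)}^n$ has bounded density with respect to $\omega^n$ on the ample locus, and outside $\Amp(\theta)$ the non-pluripolar product puts no mass, so the density is globally $L^\infty$ and the conclusion follows.

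For part (iii), let $\pi\colon Y\to X$ be a log-resolution of the ideal sheaf associated to the analytic singularities of $\varphi$, so that $\pi^*\varphi$ is of divisorial type with smooth remainder. A local computation around the exceptional divisor shows that the density of $(\pi^*\theta + {\rm dd}^c\pi^*\varphi)^n$ with respect to a smooth volume form on $Y$ is of the form $\bigl(\prod_j |s_j|^{-2\alpha_j}\bigr)\cdot h$ with $h$ smooth and $s_j$ local equations of the exceptional components; this is in $L^p$ for some $p>1$, provided $m_\varphi > 0$ so that the leading order does not saturate integrability. Pushing forward by $\pi$, one gets an $L^p$ density for $\theta_\varphi^n$ away from the analytic singular locus $V$; since $V$ is pluripolar it carries no non-pluripolar Monge--Amp\`ere mass, and part (ii) concludes.

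The main obstacle is part (i), where one must produce an $L^p$ bound on $\theta_\phi^n$ with no a priori regularity information on the model potential $\phi$. My approach is to use the truncations $\phi_k := \max(\phi, V_\theta - k)$: by plurifine locality (Lemma~\ref{lem: plurifine}), $\id_{\{\phi > V_\theta - k\}}\,\theta_\phi^n = \id_{\{\phi > V_\theta - k\}}\,\theta_{\phi_k}^n$, so it suffices to control $\theta_{\phi_k}^n$ on this set uniformly in $k$. The potentials $\phi_k$ have minimal singularity type in $\{\theta\}$, so by an extension of \cite[Theorem~1.1]{DNT23} they are $C^{1,\bar{1}}$ on $\Amp(\theta)$. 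To obtain a uniform bound as $k\to\infty$, one compares $\theta_{\phi_k}^n$ to $\theta_{V_\theta}^n$ on the coincidence set $\{\phi_k = V_\theta\}$ via a Berman--Boucksom type orthogonality inequality $\theta_{\phi_k}^n \leq \id_{\{\phi_k = V_\theta\}}\,\theta_{V_\theta}^n$; the model property $\phi = P_\theta[\phi]$ is what makes this comparison effective in the limit $k\to\infty$. Combined with the $L^\infty$ density of $\theta_{V_\theta}^n$ on $\Amp(\theta)$, this yields finite entropy for $\phi$. The delicate step, and the real difficulty, is justifying the orthogonality inequality in the relative big setting.
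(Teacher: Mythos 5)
The reductions in (ii) and (iii) are fine and match the paper's route ((iii) is exactly the content of \cite[Proposition~4.36]{DDL2}, which the paper simply cites, and your resolution sketch is the idea behind it). But part (i), which you correctly identify as the crux, has a genuine gap: the ``orthogonality inequality'' $\theta_{\phi_k}^n \leq \id_{\{\phi_k = V_\theta\}}\,\theta_{V_\theta}^n$ that your argument rests on is \emph{false}. A mass count shows this: $\phi_k = \max(\phi, V_\theta - k)$ has minimal singularity type, so $\int_X \theta_{\phi_k}^n = \vol(\theta)$, whereas the right-hand side has total mass at most $\int_{\{\phi = V_\theta\}} \theta_{V_\theta}^n$, which is strictly smaller than $\vol(\theta)$ whenever $m_\phi < \vol(\theta)$ (the measure $\theta_{\phi_k}^n$ necessarily carries mass on the region $\{\phi \leq V_\theta - k\}$ where $\phi_k = V_\theta - k$, and that mass does not go away as $k \to \infty$ unless $\phi$ has full mass). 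There is also no reason for $\max(\phi, V_\theta - k)$ to be $C^{1,\bar 1}$ on $\Amp(\theta)$: it is a maximum of potentials, not an envelope of a smooth obstacle, so \cite[Theorem~1.1]{DNT23} does not apply. The correct statement you are groping toward is the structure theorem for envelopes of singularity types, \cite[Theorem~3.6]{DDL6}: for a model potential $\phi$ one has directly $\theta_\phi^n \leq \id_{\{\phi = 0\}}\,\theta^n \leq C\omega^n$, so the density is globally bounded and (i) is a one-line consequence. No truncation and no regularity theory are needed.

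A secondary issue concerns (iv): $C^{1,\bar 1}$ regularity of the envelope \emph{on the ample locus} only gives an $L^\infty_{\rm loc}$ density there, and the estimates of \cite{DNT23} degenerate near the non-K\"ahler locus, so your jump to a ``globally $L^\infty$'' density is not justified. The paper avoids this by first reducing to a single quasi-psh $C^{1,\bar 1}$ obstacle via $P_\theta(f_1,\dots,f_k) = P_\theta(P_{C\omega}(f_1,\dots,f_k))$ and \cite[Theorem~2.5]{DR16}, and then invoking the contact-set result \cite[Corollary~3.4(i)]{DNT21}, which bounds $\theta_{P_\theta(f)}^n$ by a form with globally bounded coefficients on the contact set. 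In short: the mechanism that actually produces the bounded density in both (i) and (iv) is that Monge--Amp\`ere measures of envelopes concentrate on contact sets where they are dominated by the obstacle data, not interior regularity of the potentials.
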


\begin{proof}
By \cite[Theorem~3.6]{DDL6},
\[\theta_\phi^n\leq {\bf 1}_{\{\phi=0\}} \theta^n\leq {\bf 1}_{\{\phi=0\}} C \omega^n\] for some positive constant $C$.
In particular, $\theta_\phi^n=g\omega^n$ for some $g\in L^\infty (X)$, $0\leq g \leq C$. This proves~(i). For (ii), we observe that since $p-1>0$, we have that
\[\int_X f \log f \omega^n \leq C \int_X f |f|^{p-1} \omega^n<+\infty. \]
 This implies that
 \[0\leq \Ent_\theta(\varphi)= -\log m_\phi + m_\phi^{-1} \int_X f \log f \omega^n<+\infty.\]

Given $\varphi\in \psh(X, \theta)$ with analytic singularities with smooth remainder, it follows from \cite[Proposition~4.36]{DDL2} that $\theta_\varphi^n= f\omega^n$ with $f\in L^p(X)$. The previous step then gives (iii).

We now prove the last statement. We first assume $k =1 $ and $f_1$ quasi-psh. If we let $\bar{\theta} = \theta + {\rm dd}^c f_1$, then
\[
P_\theta(f_1) = P_{\bar{\theta}}(0)+ f_1 \qquad \text{and}\qquad  \theta_{P_\theta(f_1)}^n = \bar{\theta}_{P_{\bar{\theta}}(0)}^n.
\]
 Observe that since $f_1$ is $C^{1,\bar{1}}$ and~quasi-psh, ${\rm dd}^c f_1$ is bounded. This means that $\bar{\theta}$ is a $(1,1)$-form with bounded coefficients.

 By \cite[Corollary 3.4\,(i)]{DNT21}, we have
 \smash{$\bar{\theta}_{P_{\bar{\theta}}(0)}^n= g_1 \bar{\theta}^n$} for some non-negative bounded function $g_1$ on~$X$. Since $ \bar{\theta}$ has bounded coefficients, we can ensure that there exists a non-negative bounded function $g_2$ such that $\bar{\theta}_{P_{\bar{\theta}}(0)}^n = g_2 \omega^n$. It then follows that $P_{\theta}(f_1)$ has finite entropy with respect to~$\omega$.

 We now treat the general case of $k$ functions $f_1, \dots,f_k$ which are assumed to be only $C^{1,\bar{1}}$. We choose $C > 0$ such that $\theta \leq C \omega$ and we claim that
\[P_\theta(f_1,\dots,f_k) = P_{\theta}(P_{C \omega}(f_1,\dots,f_k)).\]
Indeed, $P_\theta(f_1,\dots,f_k) \geq P_{\theta}(P_{C \omega}(f_1,\dots,f_k))$ since $ P_{C \omega}(f_1,\dots,f_k)\leq \min (f_1,\dots,f_k)$; for the other inequality, we have that
\[P_{\theta}(f_1,\dots,f_k) \leq \min (f_1,\dots,f_k),\]
and that $P_{\theta}(f_1,\dots,f_k)$ is $\theta$-psh hence it is also $C \omega$ plurisubharomnic. This implies
\[P_{\theta}(f_1,\dots,f_k) \leq P_{C \omega}(f_1,\dots,f_k).\]
If we now apply $P_\theta$ to both sides of the above inequality, we find
\[P_{\theta}(f_1,\dots,f_k) \leq P_\theta(P_{C \omega}(f_1,\dots,f_k)).\]
 By \cite[Theorem 2.5]{DR16}, $P_{C \omega}(f_1,\dots,f_k)$ is $C^{1,\bar{1}}$ on $X$ and quasi-psh. We can then apply the previous step to conclude.
\end{proof}

Since, by \cite[Theorem 5.19]{DDL6}, we know that $[\phi]=[\varphi]$, the above results could make a reader ask whether the property of having finite entropy is stable in the singularity class, i.e., if given~${\varphi_1,\varphi_2\in \PSH(X,\theta)}$ with $[\varphi_1]=[\varphi_2]$, then $\Ent_\theta(\varphi_1)<+\infty$ iff $\Ent_\theta(\varphi_2)<+\infty$. The answer is negative as the following example shows:

 \begin{Example} \label{exa: No Ent}
Let $U\subset X$ be a local chart and write $\omega={\rm dd}^c \rho$ in $U$ and define
\[u= \frac{1}{C } \chi \cdot \max(\log \|z\|, 0)-\rho,\]
where $\chi$ is a cut-off function such that $\chi\equiv 1$ on $\mathbb{B}(r_1)$ and $\chi\equiv 0 $ on $U\setminus \mathbb{B}(r_2)$, for $r_1, r_2>0$ small enough so that $\mathbb{B}(r_2)\Subset \mathbb{B}(r_2)\Subset U$. Without loss of generality, we may assume $r_1=1$,~$r_2=2$.
Choosing $C$ big enough, $u$ induces a $\omega$-psh function which we note by $\tilde{u}$.
Then $\tilde{u}$ is bounded, hence $[\tilde{u}]=[0]$. But
\begin{gather*}
(\omega+{\rm dd}^c \tilde{u})^n \\
\quad= C^{-n} ({\rm dd}^c (\max(\log \|z\|, 0))^n + \sum_{j=0}^{n-1} \binom{n}{j} C^{-j} \omega^{n-j} \wedge ({\rm dd}^c (\max(\log \|z\|, 0))^j \quad \text{in $\mathbb{B}(1)$}
\end{gather*}
and the measure $({\rm dd}^c (\max(\log \|z\|, 0))^n$ is the normalized Lebesgue measure on the torus $\big(S^1\big)^n\!\subset \C^n$ (that is, a real analytic subspace of real dimension~$n$). It then follows that $(\omega+{\rm dd}^c \tilde{u})^n$ is not even absolutely continuous with respect to $\omega^n$.
 \end{Example}

\subsection{Entropy and energy}
It was proved in \cite{DGL20, DNL21} that \[\Ent(X,\theta)\cap \mathcal{E}(X,\theta) \subset \cE^{\frac{n}{n-1}}(X,\theta).\] As will be shown below, one can extend these results to the case of prescribed singularities.

We start with an integrability result of Moser--Trudinger type for general weights:

\begin{Theorem}
 	\label{thm: rel Moser--Trudinger inequality weight}
 	Let $\phi$ be a model potential with $m_\phi>0$. Let $\chi_1\colon \R^+\rightarrow \R^+ $ be a weight. Let \smash{$\chi_2(t) := \int_0^t \chi_1(s)^{\frac{1}{n}} {\rm d}s$}. Then there exist $c>0$, $C>0$ depending on $X$, $\theta$, $n$, $m_\phi$ and $\omega$ such that, for all $\varphi\in \mathcal{E}_{\chi_1}(X,\theta, \phi)$ with $\sup_X \varphi=-1$, we have
 	\begin{equation}
 	\int_X \exp \big( c E_{\chi_1}(\varphi, \phi)^{-\frac{1}{n}} \chi_2(\phi-\varphi)\big)\omega^n \leq C.
 	\label{eq: Moser} \end{equation}
 \end{Theorem}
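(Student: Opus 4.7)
The plan is to follow the capacity-based strategy that underlies the proofs of \cite[Theorem~2.1]{DGL20} and \cite[Theorem~2.11]{DNL21}, adapted to the prescribed-singularities setting via the relative Monge--Amp\`ere capacity $\capa_\phi$ developed in \cite{DDL6}. As a preliminary reduction, I would work with the truncations $\varphi_j := \max(\varphi, \phi - j)$, which lie in $\mathcal{E}_{\chi_1}(X, \theta, \phi)$, have the same singularity type as $\phi$, and satisfy $\sup_X \varphi_j = -1$ for all $j$ large enough. Lemma \ref{energy increases} gives $E_{\chi_1}(\varphi_j, \phi) \nearrow E_{\chi_1}(\varphi, \phi)$ and $\varphi_j \searrow \varphi$ pointwise, so by monotone convergence on the left-hand side of \eqref{eq: Moser}, it suffices to prove the inequality under the additional assumption $\varphi \simeq \phi$.

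The heart of the argument is a decay estimate on the $\omega^n$-measure of the sub-level sets $\{\phi - \varphi > t\}$. The plan is to combine two ingredients. First, a volume--capacity comparison in the relative setting, generalizing Ko\l{}odziej's estimate: there should exist $A, \alpha > 0$, depending only on $X$, $\theta$, $m_\phi$, $\omega$, such that
\[
\omega^n(E) \leq A \exp\bigl(-\alpha\, \capa_\phi(E)^{-1/n}\bigr)
\]
for every Borel set $E \subset X$. Second, a Chebyshev-type inequality that, via the comparison principle applied to competitors $v \in \PSH(X,\theta)$ with $\phi - 1 \leq v \leq \phi$, yields
\[
\capa_\phi(\{\phi - \varphi > t\}) \leq \frac{C\, E_{\chi_1}(\varphi, \phi)}{t^n\, \chi_1(t)}.
\]
Chaining these two estimates produces a ``one-shot'' bound of the form
\[
\omega^n(\{\phi - \varphi > t\}) \leq A \exp\bigl(-\alpha\, t\, \chi_1(t)^{1/n} / E_{\chi_1}(\varphi,\phi)^{1/n}\bigr).
\]

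To upgrade the factor $t\,\chi_1(t)^{1/n}$ to $\chi_2(t)$, I would proceed by an iterative argument: apply the sub-level-set bound in telescoping fashion on increments $[\tau_k, \tau_{k+1}]$, using that on $\{\phi - \varphi > \tau_k\}$ the potential $\varphi + \tau_k$ can be re-localized and the capacity estimate re-applied with the parameter $\chi_1(\tau_k)$; summation of the resulting increments reconstructs the Riemann sum $\int_0^t \chi_1(s)^{1/n}\, {\rm d}s = \chi_2(t)$ up to a multiplicative constant. Once the improved volume estimate
\[
\omega^n(\{\phi - \varphi > t\}) \leq A \exp\bigl(-\alpha'\, \chi_2(t) / E_{\chi_1}(\varphi,\phi)^{1/n}\bigr)
\]
is in hand, the conclusion follows by layer-cake integration: with $F := c\, E_{\chi_1}(\varphi,\phi)^{-1/n}\chi_2(\phi - \varphi)$,
\[
\int_X {\rm e}^{F}\omega^n = 1 + \int_0^\infty \omega^n(\{F > \sigma\})\, {\rm e}^\sigma\, {\rm d}\sigma,
\]
and choosing $c < \alpha'$ makes the right-hand side a finite constant depending only on the declared data.

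The main obstacle is the passage from the one-shot bound with $t\,\chi_1(t)^{1/n}$ in the exponent to the sharp bound with $\chi_2(t)$. In the full-mass setting of \cite{DGL20, DNL21}, this iterative improvement relies on freely translating the reference potential and reapplying the capacity estimate level by level; in the prescribed-singularities case, one must verify that the truncations $\max(\varphi, \phi - s)$ interact compatibly with $\capa_\phi$ through the plurifine locality of Lemma \ref{lem: plurifine}, so that the iteration runs uniformly in $s$. A secondary difficulty is ensuring that all constants depend only on $m_\phi = \int_X \theta_\phi^n$ and not on finer invariants of the chosen model potential $\phi$.
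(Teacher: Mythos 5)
Your route is genuinely different from the paper's: the paper does not use capacities at all. Its proof is an envelope (Legendre-type) argument in the spirit of Berman--Berndtsson: one sets $\psi=-a\chi_2(\phi-\varphi)+\phi$, projects to $u=P_\theta(\psi)$, uses that $\theta_u^n$ is concentrated on the contact set $\{u=\psi\}$ together with the concavity of the inverse weight to get $\theta_u^n\le a^n\chi_1(\phi-\varphi)\theta_\varphi^n$, derives a contradiction on total masses for the normalization $\beta a^nE_{\chi_1}(\varphi,\phi)=m_\phi$, concludes that $\sup_X(u-\phi)\ge-\tau_2(1)$ for an explicit $\tau_2(1)$, and finishes with the uniform Skoda integrability theorem applied to $u$. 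The uniformity of the constants in $m_\phi$ comes for free because Skoda is applied to $C\omega$-psh functions, uniformly over all singularity types. Your reduction to $[\varphi]=[\phi]$ by truncation matches the paper (though the limit passage is by Fatou, not monotone convergence: $E_{\chi_1}(\varphi_j,\phi)^{-1/n}$ decreases while $\chi_2(\phi-\varphi_j)$ increases, so the integrand is not monotone).

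The capacity route has two genuine gaps as written. First, both pillars are asserted rather than proved. The Chebyshev bound $\capa_\phi(\{\phi-\varphi>t\})\le CE_{\chi_1}(\varphi,\phi)/(t^n\chi_1(t))$ is stronger than what the standard comparison-principle argument yields: the inclusion trick with competitors $\phi-1\le v\le\phi$ gives the factor $t^n$ only for perturbation parameters $t\le 1$, hence only $\capa_\phi(\{\phi-\varphi>t\})\lesssim E_{\chi_1}(\varphi,\phi)/\chi_1(t-1)$ without the extra $t^{-n}$; you would need to justify the improved decay. Likewise, the relative volume--capacity domination $\omega^n(E)\le A\exp\bigl(-\alpha\capa_\phi(E)^{-1/n}\bigr)$ with $A,\alpha$ depending \emph{only} on $m_\phi$ (and not on finer data of $\phi$) is exactly the delicate uniformity you flag at the end, and it is the crux, not a secondary difficulty. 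Second, you have the direction of the final comparison backwards: since $\chi_1$ is increasing, $\chi_2(t)=\int_0^t\chi_1(s)^{1/n}\,{\rm d}s\le t\,\chi_1(t)^{1/n}$, so your ``one-shot'' bound with $t\,\chi_1(t)^{1/n}$ in the exponent already implies the bound with $\chi_2(t)$ and no iteration is needed; the iterative ``telescoping/re-localization'' step you describe as the main obstacle is superfluous if the one-shot bound holds, and too vague to assess if it does not (it is unclear that the level-by-level volume estimates can be multiplied so as to add the exponents). In short, the layer-cake endgame is fine, but the proof stands or falls on the two unproven capacity estimates, and the paper's envelope--Skoda argument is precisely the device that circumvents them.
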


 \begin{proof}
 We first note that if we replace $\chi_1$ by $\alpha \chi_1$ with $\alpha$ positive constant the left-hand side of~\eqref{eq: Moser} does not change, so we may assume $\chi_1(1) = 1$. 	
We claim that it suffices to prove the above inequality for $\varphi$ with relative minimal singularities, i.e., $[\varphi]=[\phi]$. Indeed, given~${\varphi\in \psh(X, \theta)}$, $\varphi_j:=\max(\varphi, \phi-j)$ has the same singularity type as $\phi$. This would mean that
 \[\int_X \exp \big( c E_{\chi_1}(\varphi_j, \phi)^{-\frac{1}{n}} \chi_2(\phi-\varphi_j)\big)\omega^n \leq C.\]
Moreover, it follows from Lemma \ref{energy increases} that $ E_{\chi_1}(\varphi_j, \phi) \nearrow E_{\chi_1}(\varphi, \phi) $ as $j\rightarrow +\infty$. Fatou's lemma will then give the desired estimate.

Thus assume that $\varphi$ has relative minimal singularities.
Let $\psi=-a\chi_2(\phi-\varphi)+\phi$ where $a>0$ is a small constant to be suitably chosen. Then define $u=P_\theta(\psi)$ and $v=-\gamma_2 (\phi-u) +\phi$, where~${\gamma_2 \colon \mathbb{R}^+ \rightarrow \mathbb{R}^+}$ denotes the inverse function of $a\chi_2$, which is concave and increasing. We observe that $u$ is a $\theta$-psh function with the same singularity type as $\phi$ and that
\[v=-\gamma_2 (\phi-u) +\phi \leq -\gamma_2(\phi-\psi) +\phi= -\gamma_2(a\chi_2(\phi-\varphi)) +\phi = \varphi\] with equality on the contact set $\mathcal{C}=\{u=\psi\}$.

A simple computation gives
 \begin{flalign*}
 \theta +{\rm dd}^c v & = \gamma_2'(\phi-u) \theta_u + (1-\gamma_2'(\phi-u)) \theta_{\phi} - \gamma_2''(\phi-u) d(\phi-u) \wedge d^c (\phi-u)\\
&\geq \gamma_2'(\phi-u) \theta_u+ (1-\gamma_2'(\phi-u)) \theta_{\phi},
 \end{flalign*}
where in the above we used that $\gamma_2$ is concave. We consider the set $G:=\{\gamma_2'(\phi-u)< 1\}$. We are going to show by contradiction that for a suitable choice of $a$ we have $G \neq X$. So, we assume $G= X$.
It then follow that $v$ is $\theta$-psh and, by construction, we infer that $v$ has the same singularity type as $\phi$.

Recall that $\sup_X \varphi=\sup_X(\varphi-\phi)$ as $\phi$ is a model potential \cite[Lemma 3.5]{DDL6}, hence $\phi-\varphi \geq 1$. This implies that $\chi_1$ is increasing and $\chi_1(1) = 1$ giving that
 \[E_{\chi_1}(\varphi, \phi) = \int_X \chi_1(\phi-\varphi) \theta_{\varphi}^n \geq \chi_1(1)\int_X \theta_{\varphi}^n= \int_X \theta_{\phi}^n=m_\phi.\]

By \cite[Theorem 2.7]{DDL6}, the non-pluripolar Monge--Amp\`ere measure $(\theta+{\rm dd}^c u)^n$ is supported on $\mathcal{C}$, hence
\begin{align*}
	(\theta+{\rm dd}^c v) &\geq \gamma_2'(\phi-u) (\theta+{\rm dd}^c u)
		 = (a\chi_2'(\phi-\varphi))^{-1} (\theta+{\rm dd}^c u)\quad \text{on $\mathcal{C}$}.
\end{align*}
The last identity follows from the fact that, since $(a\chi_2)'(\gamma_2(t))\gamma_2'(t)=1$ and $\gamma_2(\phi-u)=\phi-\varphi$ in $\mathcal{C}$, we have $\gamma_2'(\phi-u)= (a\chi_2'(\phi-\varphi))^{-1}$ in $\mathcal{C}$.

It follows from \cite[Lemma 5.19]{DDL6} that the above inequality between positive currents implies an inequality between the non-pluripolar measures (observe that this is not trivial since $(a\chi_2'(\phi-\varphi))^{-1} (\theta+{\rm dd}^c u)$ is not closed).
Thus, we can infer that
\begin{equation*}
 {\bf 1}_{\mathcal{C}} \;(a\chi_2'(\phi-\varphi))^{-n} (\theta+{\rm dd}^c u)^n
	\leq {\bf 1}_{\mathcal{C}}(\theta+{\rm dd}^c v)^n \leq {\bf 1}_{\mathcal{C}}(\theta+{\rm dd}^c \varphi)^n,
\end{equation*}
where the last inequality follows from \cite[Lemma 4.5]{DDL5}. The above is equivalent to
\begin{align}
 (\theta+{\rm dd}^c u)^n &={\bf 1}_{ \mathcal{C}} (\theta+{\rm dd}^c u)^n \leq {\bf 1}_{ \mathcal{C}} (a\chi_2'(\phi-\varphi))^{n}(\theta+{\rm dd}^c \f)^n\nonumber\\
 &\leq a^n \chi_1(\phi-\varphi)(\theta+{\rm dd}^c \f)^n, \label{eq: stimabase}
\end{align}
where in the last inequality we used that $(\chi_2')^n=\chi_1$.

We now choose $a$ so that
\[
\beta a^n \int_X \chi_1(\phi-\varphi) (\theta+{\rm dd}^c \varphi)^n=\beta a^n E_{\chi_1}(\varphi, \phi) = m_\phi
\]
 with $\beta > 1$.
Observe that $a\in(0,1)$ since we observed that $E_{\chi_1}(\varphi, \phi)\geq m_\phi$.
Integrating both sides of \eqref{eq: stimabase} over $X$, we obtain
\[
\int_X (\theta+{\rm dd}^c u)^n \leq \frac{m_\phi}{\beta}.
\]
Since $\int_X \theta_u^n=m_\phi$, we arrive at a contradiction.

So we can infer that $G\neq X$, or equivalently that $G^c\neq \varnothing$. This means that there exists~${x_0\in X}$ such that $(\phi-u)(x_0)\leq \tau_2(1)$ where $\tau_2$ is the inverse function of $\gamma_2'$ which we note to be decreasing (since so in $\gamma_2'$). In particular,
\[\sup_X u =\sup_X (u-\phi) \geq -\tau_2(1).\]

Applying the uniform version of Skoda's integrability theorem \cite{Zer01} to $\PSH(X,C\omega)$ for ${C>0}$ such that $\theta\leq C\omega$, we know that there exist uniform constants $c_0, C_0>0$ such that, for all~${h \in \PSH(X,\theta)}$ with $\sup_X h=0$ we have $\int_X {\rm e}^{-c_0 h} \omega^n \leq C_0$. For $h:=u+\tau_2(1)$, we have $\sup_X h \geq 0$, hence
\begin{align*}
 \int_X {\rm e}^{c_0 (a \chi_2(\phi-\varphi) -\tau_2(1))} \omega^n &= \int_X {\rm e}^{c_0 (-\psi+\phi -\tau_2(1))} \omega^n \leq \int_X {\rm e}^{-c_0 (u +\tau_2(1))} \omega^n
\\&= \int_X {\rm e}^{-c_0 (h-\sup_X h) - c_0\sup_X h} \leq C_0,
\end{align*}
where in the first inequality we used $\phi\leq 0$ and $u\leq \psi$.

Let us now give a more explicit expression of $\tau_2(1)$.
By definition, $\tau_2$ is such that
\[s=\tau_2 \left( \frac{1}{a\chi_2'(\gamma_2(s))} \right).\] Now we want to understand $\tau_2(t)$, hence we want to find $s$ such that \smash{$\tfrac{1}{a\chi_2'(\gamma_2(s))}=t $}. This means~${ (at)^{-1}=\chi_2'(\gamma_2(s)) = (\chi_1(\gamma_2(s)))^{1/n} }$ (since $(\chi_2')^n=\chi_1$). Therefore, $\gamma_1((at)^{-n}) =\gamma_2(s)$, where $\gamma_1$ is the inverse function of $\chi_1$. It the follows that $ \tau_2(t)= a\chi_2(\gamma_1((at)^{-n}))$.
In particular, $\tau_2(1)= a\chi_2(\gamma_1(a^{-n}))$.

Also, letting $s:=\gamma_1 (a^{-n})$, we have $a^{-1} = (\chi_1(s))^{\frac{1}{n}}$ so that $a=a(s) = \frac{1}{\chi_2'(s)}$ and
\begin{gather} \tau_2(1) = \frac{\chi_2(s)}{\chi_2'(s)}. \label{Bo} \end{gather}
Note that since $a\in(0,1)$, then $s > 1$ ($\gamma_1$ is increasing and $\gamma(1)=1$).

Next, setting \[K:= \{ x \in X \colon a \chi_2(\phi - \varphi) \leq 2(\phi - \varphi) \},\]
we claim that
\[2^{-1} a \chi_2(\phi - \varphi) \leq a \chi_2(\phi - \varphi) - \tau_2(1) \quad \text{on $X \setminus K$},\]
that is, $a \chi_2(\phi - \varphi) \geq 2 \tau_2(1)$ on $X \setminus K$.

We observe that, since $\chi_2$ is convex and $\chi_2(0) = 0$, the set $E:=\{ t \in \mathbb{R}^+ \colon a\chi_2(t) \leq 2 t \}$ is a~closed convex set containing $0$, i.e., it is an interval of the form $[0, \lambda]$ with $0 \leq \lambda \leq + \infty$. Hence, $x\in X\setminus K$ if and only if $(\phi-\varphi)>\lambda$.
We then need to prove that if $ a\chi_2( \phi - \varphi) > a\chi_2( \lambda)$,
then
$ a \chi_2( \phi - \varphi) \geq {2 \tau_2(1)}$. It is then sufficient to prove that $a\chi_2( \lambda) \geq 2 \tau_2(1)$. The above is equivalent to $ \lambda \geq \gamma_2(2 \tau_2(1)) \geq 0$. By definition of $\lambda$, this means that $\gamma_2(2 \tau_2(1))\in E$, i.e.,
\[\tau_2(1) \leq \gamma_2(2 \tau_2(1)),\qquad \text{or} \qquad a \chi_2(\tau_2(1)) \leq 2 \tau_2(1).\]
By (\ref{Bo}) and $a=1/\chi_2'(s)$, the above inequality gives
\[\chi_2\left(\frac{\chi_2( s)}{\chi_2' (s)} \right) \leq 2 \chi_2(s).\]
Since $\chi_2$ is convex and $\chi_2(0) = 0$, we have
$\chi_2(s) \leq s \chi_2'(s)$.
Then we obtain
\[ \chi_2\left(\frac{\chi_2( s)}{\chi_2' (s)} \right) \leq \chi_2(s) \leq 2 \chi_2(s),\]
and the claim is proved.

It follows that
\begin{align*}
\int_X {\rm e}^{\frac{c_0}{2} a \chi_2(\phi-\varphi)} \omega^n &\leq \int_K {\rm e}^{c_0 (\phi-\varphi)} \omega^n + \int_{X\setminus K}{\rm e}^{c_0(a\chi_2(\phi-\varphi) -\tau_2(1))} \omega^n\\
 &\leq \int_K {\rm e}^{-c_0 \varphi} \omega^n + \int_{X\setminus K}{\rm e}^{c_0(a\chi_2(\phi-\varphi) -\tau_2(1))} \omega^n \\
 &\leq\int_K {\rm e}^{-c_0 (\varphi+1)+c_0} \omega^n + C_0\leq C_0({\rm e}^{c_0}+1),
\end{align*}
wherein the last inequality we applied the uniform Skoda theorem to the function $\varphi + 1$. Recalling that $\beta a^n E_{\chi_1}(\varphi, \phi) = m_\phi$, the result then follows with
$c(\beta) = \frac{c_0}{2} \beta^{(\frac{-1}{n})}m_\phi^{1/n}$, and $C= C_0({\rm e}^{c_0}+1)$.
As $\beta>1$ can be chosen arbitrarily, we observe that since $\phi$ and $\varphi$ are assumed to be equisingular, the function $\exp \big( c(\beta) E_{\chi_1}(\varphi, \phi)^{-\frac{1}{n}} \chi_2(\phi-\varphi)\big)$ is uniformly bounded, so by Lebesgue dominated convergence theorem, we can choose \smash{$c = \frac{c_0}{2} m_\phi^{1/n}$}.
Observe that the constants $c$ and $C$ are independent of $\chi_1$ (and $\chi_2$).
\end{proof}

\begin{corollary}
Let $\phi$ be a model potential with $\int_X \theta_\phi^n>0$. Let $\chi_1\colon\R^+\rightarrow \R^+ $ be a weight. Let \smash{$\chi_2(t) := \int_0^t \chi_1(s)^{\frac{1}{n}} {\rm d}s$}. Let $\varphi\in \mathcal{E}_{\chi_1}(X,\theta, \phi)$ with $\sup_X \varphi=-1$. For $t \geq 0 $, we define
\[\Omega(t) := \{ x \in X\colon (\phi-\varphi)(x) \geq t \geq 0 \},\]
 and let $m(t)$ be the mass of $\Omega(t)$ with respect to $\omega^n$. Then there exists $ S > 1$ depending only on~$X$,~$\theta$, $n$, $\omega$ such that
\[ \frac{ m_\phi \big(\int_0^{+\infty} m(t) \chi_1(t)^{1/n} {\rm d}t\big)^n}{S} \leq E_{\chi_1}( \varphi, \phi).\]
\end{corollary}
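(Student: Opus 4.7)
The plan is to combine Jensen's inequality with Theorem \ref{thm: rel Moser--Trudinger inequality weight}, keeping careful track of the $m_\phi$ dependence of the constants that appear there.

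First, I would rewrite the quantity $\int_X \chi_2(\phi-\varphi)\,\omega^n$ via a layer-cake (Fubini) argument. Since $\sup_X \varphi = -1$ and \cite[Lemma 3.5]{DDL6} gives $\sup_X(\varphi - \phi) = \sup_X \varphi$, we have $\phi - \varphi \geq 1$ on $X$, so by the definition of $\chi_2$,
\begin{equation*}
\chi_2(\phi - \varphi)(x) = \int_0^{+\infty} \chi_1(s)^{1/n}\, \id_{\{(\phi - \varphi)(x) \geq s\}}\, {\rm d}s,
\end{equation*}
and Fubini's theorem yields
\begin{equation*}
\int_X \chi_2(\phi-\varphi)\, \omega^n = \int_0^{+\infty} m(s)\, \chi_1(s)^{1/n}\, {\rm d}s.
\end{equation*}

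Next, since $\omega^n$ has total mass $1$, Jensen's inequality for the convex function $\exp$ combined with Theorem \ref{thm: rel Moser--Trudinger inequality weight} gives
\begin{equation*}
\exp\!\left( c\, E_{\chi_1}(\varphi, \phi)^{-1/n}\! \int_X \chi_2(\phi-\varphi)\, \omega^n \right) \leq \int_X \exp\!\big( c\, E_{\chi_1}(\varphi, \phi)^{-1/n}\, \chi_2(\phi-\varphi)\big)\, \omega^n \leq C.
\end{equation*}
Taking logarithms and then raising to the $n$-th power yields
\begin{equation*}
\left(\int_0^{+\infty} m(s)\, \chi_1(s)^{1/n}\, {\rm d}s\right)^{\!n} \leq \left(\frac{\log C}{c}\right)^{\!n} E_{\chi_1}(\varphi, \phi).
\end{equation*}

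Finally, I would extract the explicit $m_\phi$-dependence of $c$. The proof of Theorem \ref{thm: rel Moser--Trudinger inequality weight} furnishes the values $c = (c_0/2)\, m_\phi^{1/n}$ and $C = C_0({\rm e}^{c_0}+1)$, where $c_0$ and $C_0$ come from the uniform Skoda theorem applied to $\psh(X, C'\omega)$ for any $C' > 0$ with $\theta \leq C'\omega$, and hence depend only on $X$, $\theta$, $n$, $\omega$. Substituting and setting $S := \max\{2,\, (2\log C/c_0)^n\} > 1$, which depends only on $X$, $\theta$, $n$, $\omega$, gives the announced inequality. No real obstacle is expected; the only subtlety is to isolate this power of $m_\phi$ cleanly from the constants of Theorem \ref{thm: rel Moser--Trudinger inequality weight}.
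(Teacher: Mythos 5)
Your argument—rewriting $\int_X \chi_2(\phi-\varphi)\,\omega^n$ by Fubini as $\int_0^{+\infty} m(t)\chi_1(t)^{1/n}\,{\rm d}t$, applying Jensen's inequality to the Moser--Trudinger estimate of Theorem \ref{thm: rel Moser--Trudinger inequality weight}, and then extracting the $m_\phi^{1/n}$ factor from the explicit constant $c$—is exactly the paper's proof, up to your cosmetic use of $\max\{2,\cdot\}$ to force $S>1$ where the paper instead tests the inequality on $\varphi=\phi-1$. The proposal is correct.
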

\begin{proof}
Observe that again all inequalities are unchanged if we replace $\chi_1$ with $\alpha \chi_1$ where $\alpha$ is a positive constant, so we assume $\chi_1(1) = 1$.
Let $\Omega = \{(x,t) \in X \times \mathbb{R}^+ \colon (\phi - \varphi)(x) \geq t\geq 0\}$.
Since $\omega$ has volume $1$, by Jensen's inequality and by Theorem \ref{thm: rel Moser--Trudinger inequality weight}, we find
\[c E_{\chi_1}(\varphi,\phi)^{-\frac{1}{n}} \int_X \int_0^{(\phi-\varphi)(x)} \chi_1(t)^{1/n} {\rm d}t \ \omega^n = c E_{\chi_1}(\varphi, \phi)^{-\frac{1}{n}} \int_\Omega \chi_1(t)^{1/n} {\rm d}t \wedge \omega^n \leq \log C. \]
By the Fubini theorem, we obtain the inequality if we recall the value of the constants $C$ and $c$ given in the proof of \ref{thm: rel Moser--Trudinger inequality weight} and we let
\smash{$ S = \big(\frac{2 \log (C_0({\rm e}^{C_0} + 1))}{c_0 }\big)^n$}.

Also, observe that, if we choose $\varphi = \phi-1$ and $\chi_1(t) = t^p$ with $p > 0 $, we find \smash{$ \frac{S}{m_\phi} \geq \frac{n^n}{(p+n)^n m_\phi}$}. If we let $p$ go to zero, we conclude that $\frac{S}{m_{\phi}} \geq \frac{1}{m_{\phi}}$, i.e., $S \geq 1$.

If we apply the inequality choosing $\varphi$ as in the statement of the corollary with $\varphi \neq \phi -1$, we see that in fact $S > 1$.
\end{proof}

\begin{Remark}
\label{rem:Case p}
Observe that $\chi_1(t)=t^p$ and $\chi_2(t)= q^{-1}t^q$ with $q=1+p/n$ satisfy the assumptions of Theorem~\ref{thm: rel Moser--Trudinger inequality weight}. With this particular choice of weights, we have that
\[\gamma_2(t)=(a^{-1}q)^{1/q} t^{1/q}, \qquad \gamma_2'(t)=a^{-1/q} q^{\frac{1-q}{q}} t^{\frac{1-q}{q}}\] and
 \[ \tau_2(t)= a^{\frac{1}{1-q}} q^{-1}t^{\frac{q}{1-q}}=a^{-\frac{n}{p}} q^{-1}t^{-\frac{n+p}{p}}.\]
 Thus, in this particular case $\tau_2(1)=a^{-\frac{n}{p}} q^{-1} $, that is, (up to a power of $q$) the constant appearing in \cite[Theorem 2.11]{DNL21}.
\end{Remark}

As consequence of Theorem \ref{thm: rel Moser--Trudinger inequality weight}, we then get:

\begin{Theorem}	\label{thm: Entropy implies rel Ep}
Let $\phi$ be a model potential with $\int_X \theta_\phi^n>0$. Assume that $n > 1$,	
 fix $B>0$ and set $p=\frac{n}{n-1}$.
	There exist $c,C>0$ depending only on $B$, $X$, $\theta$, $\omega$, $n$ such that for all $\varphi\in \Ent_B(X,\theta) \cap\mathcal{E}(X,\theta,\phi)$, we have
\[
	\int_X {\rm e}^{c (\phi-\varphi)^p} \omega^n \leq C \qquad \text{and} \qquad E_p(\varphi, \phi) \leq C.
\]
	In particular, $\Ent(X,\theta) \cap\mathcal{E}(X,\theta,\phi) \subset \mathcal{E}^{\frac{n}{n-1}}(X,\theta, \phi)$.

If $n=1$ and $ \varphi\in \Ent(X,\theta) \cap\mathcal{E}(X,\theta,\phi)$, then $\varphi$ has the same singularity type of $\phi$. In particular, $\varphi\in \mathcal{E}^{p}(X,\theta, \phi)$ for any $p\geq 1$.
 \end{Theorem}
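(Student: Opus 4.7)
The plan is a short bootstrap that couples the Moser--Trudinger inequality of Theorem \ref{thm: rel Moser--Trudinger inequality weight}, specialized to a carefully chosen weight, with the Young (Donsker--Varadhan) duality
\[
\int_X g\,d\mu \leq \Ent(\mu,\nu) + \log\int_X e^g\,d\nu
\]
valid for probability measures $\mu,\nu$, applied to $\mu=m_\varphi^{-1}\theta_\varphi^n$ and $\nu=\omega^n$. Moser--Trudinger produces exponential integrability of $(\phi-\varphi)^p$ in terms of $E_p(\varphi,\phi)$, while the Young inequality, fed by the entropy hypothesis, closes the loop and bounds $E_p(\varphi,\phi)$.

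\textbf{Reduction.} After adding a constant, normalize $\sup_X\varphi=-1$, which changes neither $\theta_\varphi^n$ nor the entropy nor $E_p(\varphi,\phi)$. Replacing $\varphi$ by its truncation $\varphi_j:=\max(\varphi,\phi-j)$ reduces us to the case where $\varphi$ has the same singularity type as $\phi$, so $E_p(\varphi,\phi)<+\infty$ is known a priori. Plurifine locality (Lemma \ref{lem: plurifine}) splits $\theta_{\varphi_j}^n$ into its piece on $\{\varphi>\phi-j\}$, which coincides with $\theta_\varphi^n$ there, and its piece on the complement, which coincides with $\theta_\phi^n$; since $\Ent_\theta(\phi)<+\infty$ by Proposition \ref{env}(i), this yields $\Ent_\theta(\varphi_j)\leq B+\Ent_\theta(\phi)$ uniformly in $j$. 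Lemma \ref{energy increases} then allows passing to the limit $j\to+\infty$ in any bound proved for $\varphi_j$.

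\textbf{Main step.} Apply Theorem \ref{thm: rel Moser--Trudinger inequality weight} with the weight $\chi_1(t):=t^p$ for $p:=n/(n-1)$. The identity $1+p/n=p$ gives $\chi_2(t)=\tfrac{1}{p}t^{\,p}$, hence
\[
\int_X \exp\Bigl(\tfrac{c}{p}\,E_p(\varphi,\phi)^{-1/n}\,(\phi-\varphi)^p\Bigr)\omega^n \leq C.
\]
Plugging $g:=\tfrac{c}{p}\,E_p(\varphi,\phi)^{-1/n}(\phi-\varphi)^p$ into the Young inequality yields
\[
\tfrac{c}{p\,m_\varphi}\,E_p(\varphi,\phi)^{1-1/n} \leq B + \log C.
\]
Since $1-1/n=1/p$, this bounds $E_p(\varphi,\phi)^{1/p}$, and hence $E_p(\varphi,\phi)$, by a constant depending only on $B,X,\theta,\omega,n$. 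Substituting this a posteriori bound back into the Moser--Trudinger estimate delivers the uniform exponential integrability $\int_X e^{c'(\phi-\varphi)^p}\omega^n\leq C'$.

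\textbf{The case $n=1$ and main obstacle.} For $n=1$ the exponent $n/(n-1)$ diverges, so the one-shot argument fails. Instead one iterates the Moser--Trudinger/Young couple with $\chi_1(t)=t^q$, $q\in\mathbb{N}$: for $n=1$ one has $\chi_2(t)=t^{q+1}/(q+1)$, and the same computation produces a recursive bound $E_{q+1}(\varphi,\phi)\leq c_q\,E_q(\varphi,\phi)$ starting from $E_0(\varphi,\phi)=m_\varphi$, hence finiteness of every $E_q(\varphi,\phi)$. The main technical obstacle is to upgrade these finite weighted energies into pointwise boundedness of $\phi-\varphi$. In dimension one this is achieved by using that $\theta_\varphi^n$ has $L\log L$ density and that $\varphi-\phi$ can be recovered from $\theta_\varphi-\theta_\phi$ via convolution with the Green function on the Riemann surface: the Orlicz duality between $L\log L$ and $\exp L$ then makes the convolution bounded, forcing $\phi-\varphi\in L^\infty$, which gives $\varphi\in\mathcal{E}^p(X,\theta,\phi)$ for every $p\geq 1$.
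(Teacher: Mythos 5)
For $n>1$ your argument is essentially the paper's: both feed the Moser--Trudinger estimate of Theorem \ref{thm: rel Moser--Trudinger inequality weight} with the weight $\chi_1(t)=t^{p}$, $p=\tfrac{n}{n-1}$, into a convex-duality inequality against the entropy (the paper uses the conjugate pair $\chi(s)=(s+1)\log(s+1)-s$, $\chi^*(t)={\rm e}^t-t-1$, which is the same mechanism as your Donsker--Varadhan form), and both exploit $1-\tfrac1n=\tfrac1p$ to close the loop. The only structural difference is how the a priori finiteness of $E_p(\varphi,\phi)$ is secured before Moser--Trudinger is invoked: you truncate to $\varphi_j=\max(\varphi,\phi-j)$ and pass to the limit via Lemma \ref{energy increases}, whereas the paper shows $\int_X(\phi-v)^p\theta_\varphi^n<+\infty$ for every $v\in\mathcal{E}^p(X,\theta,\phi)$ and invokes the characterization of $\mathcal{E}^p$ from \cite{DDL6}. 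Your route is workable, but two points need care: the decomposition of $\theta_{\varphi_j}^n$ on $\{\varphi\le\phi-j\}$ does not follow from Lemma \ref{lem: plurifine} alone (that set is not plurifine open; you need the maximum-principle identity for $\theta_{\max(u,v)}^n$ together with mass accounting), and the resulting entropy bound for $\varphi_j$ is not literally $B+\Ent_\theta(\phi)$ but an $L\log L$ estimate for a sum of densities, whose dependence on $m_\phi$ must be checked to cancel against the factor $m_\phi^{1/n}$ hidden in the Moser--Trudinger constant if the final constants are to be independent of $\phi$, as the theorem asserts.

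The genuine gap is in the case $n=1$. Your Green-function step represents $\varphi-\phi$ as a constant plus $\int_X G(x,\cdot)\,{\rm dd}^c(\varphi-\phi)$, and the Orlicz duality between $L\log L$ and $\exp L$ controls only the contribution of the absolutely continuous part of ${\rm dd}^c(\varphi-\phi)$. The finite-entropy hypothesis, however, constrains only the non-pluripolar part $\langle\theta_\varphi\rangle=f\omega$: the full positive current $\theta+{\rm dd}^c\varphi$ may carry an additional singular part $\sigma$ concentrated on the polar set $\{\varphi=-\infty\}$, and likewise $\theta+{\rm dd}^c\phi$ carries a part $\sigma_\phi$ (in dimension one any model potential of mass $m<\vol(\theta)$ necessarily has one, cf.\ Proposition \ref{mass}). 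The representation formula therefore contains the term $\int_X G(x,\cdot)\,{\rm d}(\sigma-\sigma_\phi)$, which your argument does not control; showing it is bounded amounts to showing $\sigma=\sigma_\phi$, which is essentially the conclusion $\varphi\simeq\phi$ you are trying to reach. The paper circumvents this: the identity $0\le\Ent\big(m_\varphi^{-1}\mu,a^{-1}{\rm e}^{-cu}\omega\big)=\Ent\big(m_\varphi^{-1}\mu,\omega\big)+\log a+c\int_X u\,{\rm d}\mu$ turns the entropy bound into $\SH(X,\omega)\subset L^1(\langle\theta_\varphi\rangle)$, and a dedicated lemma, proved by relative integration by parts against the truncated log-pole functions $\psi_a^k$, upgrades this to the sub-mean-value inequality $\phi(a)\le\varphi(a)+C$ for all $a$, hence to boundedness of $\phi-\varphi$. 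Note finally that your iteration $E_{q+1}\le c_qE_q$, even granting it, only yields $\varphi\in\mathcal{E}^q(X,\theta,\phi)$ for all $q$, which is the ``in particular'' clause and is strictly weaker than the boundedness of $\phi-\varphi$ claimed by the theorem.
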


 Here
\[
\Ent_B(X,\theta) := \bigl\{u \in \psh(X, \theta)\colon \sup_X u =-1, \ \Ent_\theta(u) \leq B \bigr\}.
\]

We also emphasize the constants $c, C>0$ in the statement do not depend on $\phi$.

\begin{proof}
 We consider the convex function $\chi\colon s \in \R^+ \mapsto (s+1) \log (s+1) -s \in \R^+$. Its conjugate convex function is
\[
\chi^*\colon\ t \in \R^+ \mapsto \sup_{s>0} \{ st-\chi(s) \}={\rm e}^t-t-1 \in \R^+.
\]

By definition, these functions satisfy, for all $s,t>0$,
\begin{equation} \label{eq:conj}
st \leq \chi(s)+\chi^*(t).
\end{equation}

Set $\mu=\theta_\varphi^n = f \omega^n$.	We claim that for any $v\in \mathcal{E}^p(X,\theta, \phi)$ we have $\int_X |v-\phi|^p {\rm d}\mu <+\infty$. Indeed, fix $v \in \mathcal{E}^p(X,\theta, \phi)$ with $\sup_X v=-1$ and observe that $1+\frac{p}{n} = p$. It follows from Theorem \ref{thm: rel Moser--Trudinger inequality weight} (see also Remark \ref{rem:Case p}) that for some $c>0$ small enough
\begin{equation}\label{unif_bound_exp}
\int_X {\rm e}^{c (\phi-v)^p} \omega^n <+\infty.
\end{equation}

We apply \eqref{eq:conj} with $s=f(x)$ and $t=c(\phi(x)-v(x))^p$. This yields
\[
\int_X c(\phi-v)^p \theta_\f^n= \int_X c (\phi-v)^p f \omega^n \leq \int_X \chi\circ f \omega^n +\int_X ({\rm e}^{c (\phi-v)^p}-c (\phi-v)^p-1) \omega^n,
\]
where the first integral is finite since $\varphi$ has finite entropy, while the second is bounded thanks to~\eqref{unif_bound_exp} and the integrability properties of qpsh functions.
This means that $\int_X (\phi-v)^p \theta_\f^n<+\infty$, proving the claim. By \cite[Theorem~1.4]{DDL6}, we can thus infer that $\f\in \mathcal{E}^p(X,\theta, \phi)$.

Using \eqref{eq:conj} again, we see that
 \begin{align*}
 	 \int_X c \frac{(\phi-\f)^{p}}{ E_p(\f, \phi)^{1/n}} f \omega^n
 	 & \leq \int_X \left({\rm e}^{c \frac{(\phi-\f)^{p}}{ E_p(\f, \phi)^{1/n}}} -c \frac{(\phi-\f)^{p}}{ E_p(\f, \phi)^{1/n}}-1 \right)\omega^n
 	 + \int_X \chi\circ f \omega^n.
 \end{align*}
 The right-hand side is uniformly bounded by $C_1>0$ thanks to Theorem \ref{thm: rel Moser--Trudinger inequality weight} and the fact that~${\Ent_\theta(\varphi, \phi)< B}$. From the above inequality and the fact that $\theta_\varphi^n= f\omega^n$, we get \[c { E_p(\f, \phi)^{-1/n}} \int_X {(\phi-\f)^{p}} \theta_\varphi^n = c E_p(\f, \phi )^{1-1/n} \leq C_1,\]
 which yields $E_p(\f, \phi) \leq C_2$.
Finally, invoking Theorem \ref{thm: rel Moser--Trudinger inequality weight} again, we obtain{\samepage
\[
\int_X {\rm e}^{\gamma (\phi-\f)^p} \omega^n \leq C_3,
\]
 where \smash{$\gamma=c C_2^{-1/n}>0$} is a uniform constant.}

We now treat the case of Riemann surfaces.

Assume $\varphi\in \Ent(X,\theta)\cap \mathcal{E}(X,\theta,\phi)$. We first proceed similarly to \cite[Lemma~3.16]{Trus20b} to prove that $\SH(X,\omega)\subset L^1(\mu)$ for $\mu=\theta_\varphi$. The conclusion will then follow from the lemma below. We can assume without loss of generality that $\int_X \omega=1$. Consider $u\in \SH(X,\omega)$, $u\leq 0$ and let $c>0$ such that~${{\rm e}^{-cu}\in L^1(\omega)}$. Setting $a:=\int_X {\rm e}^{-cu}\omega$, we have
\[
0\leq \Ent\big(m_\varphi^{-1}\mu, a^{-1}{\rm e}^{-cu}\omega\big)=\Ent\big(m_\varphi^{-1}\mu, \omega\big)+\log a+c\int_X u {\rm d}\mu\leq C+ \log a+c\int_X u {\rm d}\mu,
\]
which yields $u\in L^1(\mu)$ and concludes the claim.
 \end{proof}

\begin{Lemma}
Assume $n =1$ and $\varphi\in \mathcal{E}(X, \theta, \phi)$. Then $|\varphi-\phi|< C$, $C>0$ if and only if~${\SH(X, \omega)\subset L^1(\theta_\varphi)}$.
\end{Lemma}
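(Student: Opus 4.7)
The plan is to treat the two implications separately, exploiting in complex dimension one the classical logarithmic potential theory on the compact Riemann surface $(X,\omega)$; recall that $\{\theta\}$ is K\"ahler in this setting and that $\SH(X,\omega)$ coincides with $\PSH(X,\omega)$.

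For the direction $|\varphi-\phi|<C \Rightarrow \SH(X,\omega)\subset L^1(\theta_\varphi)$, I first observe that $\varphi\in\mathcal{E}(X,\theta,\phi)$ being more singular than $\phi$, combined with $\int_X\theta_\varphi=\int_X\theta_\phi$, forces $\nu(\varphi,q)=\nu(\phi,q)$ at every $q\in X$: monotonicity of Lelong numbers gives $\nu(\varphi,q)\geq\nu(\phi,q)$, while the total atomic mass $V-m_\phi$ of $\theta+dd^c\,\cdot$ coincides for the two potentials. Hence $\theta+dd^c\varphi$ and $\theta+dd^c\phi$ share the same polar part $D$, and $\theta_\varphi-\theta_\phi=dd^c(\varphi-\phi)$ holds as signed measures with $\varphi-\phi$ bounded. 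For $u\in\SH(X,\omega)$ with $u\leq 0$, I will split
\[ \int_X(-u)\,\theta_\varphi \;=\; \int_X(-u)\,\theta_\phi \;+\; \int_X(-u)\,dd^c(\varphi-\phi). \]
The first summand is finite because Proposition~\ref{env}(i) yields $\theta_\phi\leq c_0\omega$ and $u\in L^1(\omega)$. An integration by parts, justified by smooth $\omega$-psh approximation of $u$ from above together with quasi-continuity of $\varphi-\phi$ and non-pluripolarity of $\theta_\varphi,\theta_\phi$, rewrites the second summand as $\int_X(\phi-\varphi)\,dd^c u$; since $dd^c u=\omega_u-\omega$ has total variation at most $2\int_X\omega$, this is bounded by $2C\int_X\omega$.

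For the converse, assume $\SH(X,\omega)\subset L^1(\theta_\varphi)$ and for each $p\in X$ let $\psi_p\in\SH(X,\omega)$ be the potential with $\omega+dd^c\psi_p=V\delta_p$ and $\sup_X\psi_p=0$. The key step is to upgrade the pointwise finiteness of the logarithmic potential
\[ U^{\theta_\varphi}(p):=-\int_X\psi_p\,d\theta_\varphi \]
to uniform boundedness in $p\in X$. If $\sup_p U^{\theta_\varphi}(p)=+\infty$, I would pick $p_n$ with $U^{\theta_\varphi}(p_n)\geq n^2 2^n$ and set $u:=\sum_{n\geq 1} 2^{-n}\psi_{p_n}$; since $\omega+dd^c u=V\sum 2^{-n}\delta_{p_n}\geq 0$ the function $u$ lies in $\SH(X,\omega)$, yet monotone convergence gives $\int_X u\,d\theta_\varphi=-\sum 2^{-n}U^{\theta_\varphi}(p_n)=-\infty$, contradicting $u\in L^1(\theta_\varphi)$. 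Introducing the normalized Green function $G$ on $(X,\omega)$, with $dd^c_y G(x,\cdot)=\delta_x-\omega$ and logarithmic singularity along the diagonal, the potentials $v_\varphi(x):=\int_X G(x,y)\,d\theta_\varphi(y)$ and $v_\phi(x):=\int_X G(x,y)\,d\theta_\phi(y)$ satisfy $dd^c v_\varphi=\theta_\varphi-m_\phi\omega$ and $dd^c v_\phi=\theta_\phi-m_\phi\omega$; then $v_\varphi$ is bounded thanks to the uniform control on $U^{\theta_\varphi}$, while $v_\phi$ is bounded because $\theta_\phi$ has bounded density with respect to $\omega$ by Proposition~\ref{env}(i).

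To conclude I apply the Lelong-number equality once more:
\[ dd^c\varphi-dd^c v_\varphi \;=\; D + m_\phi\omega - \theta \;=\; dd^c\phi-dd^c v_\phi, \]
so the $L^1_{loc}$ function $w:=(\varphi-v_\varphi)-(\phi-v_\phi)$ is a distributional solution of $dd^c w=0$; by elliptic regularity together with compactness and connectedness of $X$, $w$ is constant. Therefore $\varphi-\phi=v_\varphi-v_\phi+\mathrm{const}$ is bounded. The main obstacle is the uniform-boundedness step for $U^{\theta_\varphi}$: lower semicontinuity and pointwise finiteness do not suffice, and one must verify that $u=\sum 2^{-n}\psi_{p_n}$ defines a genuine (not identically $-\infty$) $\omega$-psh function, which follows from $\sum 2^{-n}<\infty$ together with $\psi_{p_n}\leq 0$, and justify the exchange of sum and integral via monotone convergence of non-positive terms. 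A secondary technicality is the integration-by-parts identity used in the forward direction for subharmonic $u$ paired with the bounded but only Borel function $\varphi-\phi$; this is standard once $u$ is approximated from above by smooth $\omega$-psh functions.
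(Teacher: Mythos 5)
Your forward implication is essentially the paper's own argument: one splits $\int_X(-u)\langle\theta_\varphi\rangle$ into $\int_X(-u)\langle\theta_\phi\rangle$, controlled by $\langle\theta_\phi\rangle\leq C_1\omega$, plus a term that integration by parts turns into $\int_X(\phi-\varphi)\,{\rm dd}^c u$, bounded by $2C\int_X\omega$. The paper implements this with the canonical cutoffs $u_k=\max(u,-k)$ and the relative integration-by-parts lemma of Darvas--Di Nezza--Lu rather than smooth approximation, but the proof is the same.

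The converse is where you take a genuinely different route (Green/Riesz potentials instead of the paper's direct pointwise estimate), and it contains a real gap. Your final step requires that the residual parts coincide, $D_\varphi:={\bf 1}_{\{\varphi=-\infty\}}(\theta+{\rm dd}^c\varphi)={\bf 1}_{\{\phi=-\infty\}}(\theta+{\rm dd}^c\phi)=:D_\phi$, and you derive this by arguing that the two currents have equal Lelong numbers because they have the same ``total atomic mass'' $V-m_\phi$. This conflates the residual (polar) part with its atomic part. In complex dimension one the residual measure of a quasi-subharmonic function need not be atomic: by Evans' theorem any compact polar set carries a probability measure whose logarithmic potential is $-\infty$ on that set, so there exist potentials whose residual mass is entirely diffuse while every Lelong number vanishes. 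Consequently (a) equality of the total residual masses together with $\nu(\varphi,q)\geq\nu(\phi,q)$ does not yield equality of Lelong numbers, and (b) even equality of all Lelong numbers would not identify $D_\varphi$ with $D_\phi$. Without $D_\varphi=D_\phi$ you only get ${\rm dd}^c w=D_\varphi-D_\phi$, your function $w$ need not be constant, and the lower bound on $\varphi-\phi$ does not follow. Note that $D_\varphi=D_\phi$ is essentially the singular half of the conclusion you are trying to prove, so it cannot be assumed cheaply. (A secondary, fixable point: $u=\sum_n 2^{-n}\psi_{p_n}\not\equiv-\infty$ does not follow from $\psi_{p_n}\leq 0$ and $\sum_n 2^{-n}<\infty$ alone --- replace $\psi_{p_n}$ by the constant $-4^n$ --- you need the uniform bound $\int_X(-\psi_{p_n})\,\omega\leq C$ coming from compactness of $\{v\in\SH(X,\omega)\colon \sup_X v=0\}$ in $L^1$; the paper gets the uniform bound $C_\mu$ directly from \cite[Proposition~2.7]{GZ05}.) The paper's converse avoids the residual-part issue entirely: it tests $\langle\theta_{\varphi_j}\rangle-\langle\theta_\phi\rangle$, with $\varphi_j=\max(\varphi,\phi-j)$, against the truncated logarithmic potentials $\psi_a^k$, integrates by parts, and lets $j\to\infty$ and then $k\to\infty$ so that circle averages produce the pointwise bound $\phi(a)-\varphi(a)\leq C_\mu+C_0+1$ at every $a$. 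If you wish to keep the Green-function route you must first prove $D_\varphi=D_\phi$ by other means (e.g., monotonicity of residual masses for $\varphi\preceq\phi$ combined with equality of the non-pluripolar masses), which is precisely the nontrivial input your Lelong-number count was meant to supply.
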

\begin{proof}
Since there could be confusion, in the proof we use the notation $\langle \theta_\varphi\rangle$ to denote the non-pluripolar part of the measure $\theta_\varphi$. We want then to prove that $|\varphi-\phi|< C$, $C>0$ if and only if $\SH(X, \omega)\subset L^1(\langle \theta_\varphi\rangle)$. Observe that by definition $\langle \theta_\varphi\rangle = {\bf 1}_{\{\varphi>-\infty\}} \theta_\varphi$. Assume~${|\varphi-\phi|\leq C}$. Let $u\in \text{SH}(X, \omega) $ that without loss of generality can be assumed to be negative. Consider the bounded $\omega$-psh approximants $u_k:=\max(u, -k)$. Then using integration by parts \cite[Lemma~4.7]{DDL6} (with $\varphi_1=u_k$, $\varphi_2=0$, $\psi_1=\varphi$, $\psi_2=\phi$), we get
\begin{align*}
\int_X (-u_k) \langle \theta_{\varphi} \rangle
&= \int_{X} (-u_k) (\langle\theta_{\phi}\rangle + \langle\theta_{\varphi}\rangle -\langle\theta_{\phi}\rangle)\leq \int_{X} (-u_k) \langle \theta_\phi \rangle + \int_{X} (-u_k) (\langle\theta_{\varphi}\rangle -\langle\theta_{\phi}\rangle)\\
&\leq C_1 \int_X (-u_k) \omega +\int_X (\phi-\varphi) {\rm dd}^c u_k\\
&\leq C_1 \int_X (-u_k) \omega + C \int_X\omega_{u_k} - \int_X (\phi-\varphi)\omega\leq C_1 \int_X (-u_k) \omega + 2C \int_X\omega,
 \end{align*}
 where in the fifth line we used that $|\varphi-\phi|\leq C$, in the forth line that
\[\langle \theta_\phi \rangle\leq {\bf 1}_{\{\phi=0\}} \theta\leq {\bf 1}_{\{\phi=0\}} C_1 \omega\]
for some positive constant $C_1 $ (see \cite[Theorem 3.6]{DDL6}), while in the last line we used that ${\int_X \omega_{u_k} = \int_X \omega}$ and that $\varphi-\phi \leq C$. Since $\int_X (-u_k) \omega $ is uniformly bounded \cite[Proposition~1.7]{GZ05}, we arrive at
\[\int_X (-u_k) \langle \theta_{\varphi} \rangle \leq C_2\]
for some positive constant independent in $k$. Now, $\{-u_k\}$ is increasing to $-u$. Fatou's lemma ensures that
\[\int_X (-u) \langle \theta_{\varphi} \rangle \leq C_2,\]
that is, what we wanted to prove.

We now prove the viceversa. Let $\mu=\langle \theta_\varphi \rangle $. For any fixed $k\in \mathbb{N}$, we consider the following sequence of $\omega$-sh functions.
We choose a point $a \in X$. In a neighbourhood of $a$, which will be identified with the unit ball where the origin corresponds to $a$, we consider the bounded psh function $u_k(z):=\max(\log\|z\|, -k)$.
Such a psh function $u_k$ can be globalized to a genuine $\omega$-sh function, denoted by $\psi_a^k$, normalized with $\sup_X \psi_a^k=0$ (see, for example, \cite[p.~613]{GZ05}).
By assumption and \cite[Proposition~2.7]{GZ05},
\[\int_X \bigl(-\psi_a^k\bigr) {\rm d}\mu \leq C_{\mu}.\]
 Moreover, by construction we have that $\omega+{\rm dd}^c \psi_a^k$ is the Lebesgue measure $\sigma_k$ on the sphere $\|z|={\rm e}^{-k}$ (see \cite[Example~3.13]{GZbook}) normalized such that \smash{$\int_{\{\lVert z\rVert ={\rm e}^{-k}\}} {\rm d}\sigma_k=1$}.

Set $\varphi_j:=\max(\varphi, \phi-j)$. Observe that $\varphi_j \searrow \varphi$, and $ -j \leq \varphi_j-\phi\leq 0$. In particular, for any fixed $j$, $ \varphi_j-\phi$ is bounded.
By \cite[Remark~2.5]{DDL2}, we infer that
\[C_\mu \geq \int_X \bigl(-\psi_a^k\bigr) \langle\theta_\varphi \rangle= \lim_j \int_X \bigl(-\psi_a^k\bigr) \langle \theta_{\varphi_j}\rangle.\]

In particular, we can assume that there exists $j_0 \in \mathbb{N}$ ($j_0=j_0(k)$) big enough such that for~${j\geq j_0}$,
\[\int_X \psi_a^k \langle \theta_{\varphi_j}\rangle \geq -C_\mu -1.\]
Thus, for $j\geq j_0$, performing integration by parts \cite[Lemma~4.7]{DDL6} (applied with $\varphi_1=\varphi_j$, $\varphi_2=\phi$, $\psi_1= \psi_a^k$, $\psi_2=0$), we obtain
\begin{align*}
C_\mu +1 &\geq \int_X \bigl(-\psi_a^k\bigr) (\langle \theta_{\varphi_j}\rangle-\langle \theta_\phi \rangle)=\int_X (\varphi_j-\phi)\omega+\int_X (\phi-\varphi_j ) \omega_{\psi_a^k}\\
&\geq \int_X \varphi_j \omega+ \int_{\{\lVert z\rVert ={\rm e}^{-k}\}} (\phi-\varphi_j ) {\rm d}\sigma_k\geq -C_0+ \frac{1}{2\pi}\int_{0}^{2\pi} (\phi-\varphi_j )\big({\rm e}^{{\rm i}\theta} {\rm e}^{-k}\big) {\rm d}\theta,
\end{align*}
where in the last inequality we used that $\int_X (-\varphi_j) \omega \leq \int_X (-\varphi) \omega \leq C_0$, for some $C_0>0$ uniform in $j$.
As $j\rightarrow +\infty$, monotone convergence gives
\[ C_\mu +C_0 +1 + \frac{1}{2\pi }\int_{0}^{2\pi} \varphi ({\rm e}^{{\rm i}\theta} {\rm e}^{-k}) {\rm d}\theta \geq \frac{1}{2\pi }\int_{0}^{2\pi} \phi({\rm e}^{{\rm i}\theta} {\rm e}^{-k}) {\rm d}\theta.\]
Then as $k\to +\infty$, thanks to \cite[Proposition 1.13]{GZbook}, we get
\[ C_\mu +C_0 +1 +\varphi(a) \geq \phi (a),\]
hence the conclusion.
\end{proof}

\subsection{Stability of the entropy}
We collect some results on how the property of having finite entropy changes when the reference probability measure changes, when we perturb the reference big cohomology class and when we pull-back through bimeromorphic holomorphic maps.

 \begin{Lemma}
 \label{lem:Sub_Ent}
 Let $\mu_1$, $\mu_2$, $\mu_3$ be probability measures on $X$. If $\mu_2=f_2 \mu_3$ with $f_2\in L^\infty(X)$, then
 \begin{gather} \label{eqn:Sub_Ent}
 \Ent(\mu_1,\mu_3)\leq\Ent(\mu_1,\mu_2)+\log\bigl(\sup_{X}f_2\bigr).
 \end{gather}
 \end{Lemma}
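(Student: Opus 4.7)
The plan is to reduce to the absolutely continuous case and then expand the Radon--Nikodym density of $\mu_1$ with respect to $\mu_3$ as a product of the densities of $\mu_1$ with respect to $\mu_2$ and of $\mu_2$ with respect to $\mu_3$.

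First, if $\Ent(\mu_1,\mu_2)=+\infty$ there is nothing to prove, so assume $\Ent(\mu_1,\mu_2)<+\infty$. By definition, $\mu_1\ll \mu_2$, so we may write $\mu_1 = f_1 \mu_2$ for some non-negative $f_1 \in L^1(X,\mu_2)$ with $f_1 \log f_1 \in L^1(X,\mu_2)$ (cf.\ Remark \ref{L^1}). Combining with the assumption $\mu_2 = f_2 \mu_3$ with $f_2 \in L^\infty(X)$, we obtain $\mu_1 = f_1 f_2 \mu_3$, and in particular $\mu_1 \ll \mu_3$. Observe that $\mu_1$ assigns zero mass to $\{f_2 = 0\}$, so all the integrands below are well defined $\mu_1$-a.e.

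Next, I compute directly from the definition:
\begin{align*}
\Ent(\mu_1,\mu_3) &= \int_X \log(f_1 f_2)\, {\rm d}\mu_1 = \int_X \log f_1\, {\rm d}\mu_1 + \int_X \log f_2\, {\rm d}\mu_1 \\
&= \Ent(\mu_1,\mu_2) + \int_X \log f_2\, {\rm d}\mu_1.
\end{align*}
Since $\log f_2 \leq \log(\sup_X f_2)$ holds $\mu_1$-a.e.\ and $\mu_1(X)=1$, the last integral is bounded above by $\log(\sup_X f_2)$, which yields \eqref{eqn:Sub_Ent}.

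There is no real obstacle here: the only point that requires a moment of care is the splitting of $\log(f_1f_2)$, which is legitimate because $\mu_1$ does not charge $\{f_2=0\}$ and because $\log f_2$ is bounded above so the two integrals cannot simultaneously be $+\infty$ and $-\infty$.
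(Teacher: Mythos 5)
Your proof is correct and follows essentially the same route as the paper: both arguments decompose the density $\mathrm{d}\mu_1/\mathrm{d}\mu_3 = f_1 f_2$ and bound the contribution of $f_2$ by $\log\bigl(\sup_X f_2\bigr)$, the only difference being that the paper writes the key step as the pointwise inequality $(f_1f_2)\log(f_1f_2)\leq (f_1f_2)\log f_1 + (f_1f_2)\log\bigl(\sup_X f_2\bigr)$ integrated against $\mu_3$, while you split $\int_X \log(f_1f_2)\,\mathrm{d}\mu_1$ directly. Your added remarks on why the splitting is legitimate ($\mu_1$ does not charge $\{f_2=0\}$ and $\log f_2$ is bounded above) are a welcome precision that the paper leaves implicit.
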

 \begin{proof}
If $\Ent(\mu_1,\mu_2)=+\infty$ the above inequality is trivial, so we can assume that $\Ent(\mu_1,\mu_2)<+\infty$. In particular, $\mu_1= f_1\mu_2$ with $f_1\geq 0$ such that $\int_X f_1 \log f_1 {\rm d}\mu_2<+\infty$. By assumption, we have $\mu_1=f_1f_2 \mu_3$. In particular, $f_1f_2\in L^1(\mu_3)$ and
\[ \int_X f_1 \log f_1 {\rm d}\mu_2= \int_X f_1 f_2 \log f_1 {\rm d}\mu_3 <+\infty.
 \]
 We then observe that
 \[
 (f_1f_2) \log(f_1f_2)\leq (f_1 f_2)\log f_1 + (f_1f_2)\log \bigl(\sup_X f_2\bigr)
 \]
 giving that
 \[\int_X f_1 f_2 \log (f_1 f_2) {\rm d}\mu_3 \leq \int_X f_1 \log f_1 {\rm d}\mu_2+ \log \bigl(\sup_X f_2\bigr) \int_X f_1 \ {\rm d}\mu_2. \]
 From this, we deduce \eqref{eqn:Sub_Ent}.
 \end{proof}

 \begin{Proposition}
 \label{lem:Holo}
 Let $\pi\colon Y\to X$ be a bimeromorphic and holomorphic map and assume that~$\tilde{\omega}$ is K\"ahler form on $Y$ normalized with volume equal to $1$. Then
 \begin{itemize}
 \item[$(i)$] $\Ent (\mu, \nu)=\Ent (\pi^\star \mu, \pi^\star \nu) $, for any two non-pluripolar probability measures $\mu$, $\nu$.
 \item[$(ii)$] If $\Ent_\theta(\varphi) <+\infty$, then $\Ent\big(m_\varphi^{-1} \pi^\star \theta^n_\varphi , \tilde{\omega}^n\big)<+\infty$. In particular, $\Ent_{\pi^\star \theta} (\pi^\star \varphi)<+\infty$.
 \end{itemize}
 \end{Proposition}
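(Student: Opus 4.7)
The key point for (i) is that $\pi$ is a biholomorphism outside a proper analytic, hence pluripolar, subset. Let $E\subset X$ denote the critical image of $\pi$ and set $F:=\pi^{-1}(E)$, so that $\pi\colon Y\setminus F\to X\setminus E$ is a biholomorphism and both $E$, $F$ are pluripolar. Since $\mu$, $\nu$, $\pi^\star\mu$ and $\pi^\star\nu$ are all non-pluripolar, these sets carry no mass for any of them. Consequently $\mu\ll\nu$ if and only if $\pi^\star\mu\ll\pi^\star\nu$, and if $\mu=f\nu$ then the Radon--Nikodym derivative of $\pi^\star\mu$ with respect to $\pi^\star\nu$ equals $f\circ\pi$ on $Y\setminus F$. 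The usual change of variables on $Y\setminus F$ yields
\[
\Ent(\pi^\star\mu,\pi^\star\nu)=\int_{Y\setminus F}(f\circ\pi)\log(f\circ\pi)\,{\rm d}\pi^\star\nu=\int_{X\setminus E}f\log f\,{\rm d}\nu=\Ent(\mu,\nu).
\]

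For (ii), apply (i) with $\mu:=m_\varphi^{-1}\theta_\varphi^n$ and $\nu:=\omega^n$. Note that $\pi^\star\omega^n$ is a probability measure on $Y$, since $\int_Y\pi^\star\omega^n=\int_X\omega^n=1$ by bimeromorphy of $\pi$. Hence
\[
\Ent\bigl(m_\varphi^{-1}\pi^\star\theta_\varphi^n,\,\pi^\star\omega^n\bigr)=\Ent_\theta(\varphi)<+\infty.
\]
Now $\pi^\star\omega$ is a smooth semi-positive $(1,1)$-form on the compact manifold $Y$, so there exists $C>0$ with $\pi^\star\omega\leq C\tilde\omega$, hence $\pi^\star\omega^n=g\tilde\omega^n$ with $0\leq g\leq C^n$ and in particular $g\in L^\infty(Y)$. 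Applying Lemma \ref{lem:Sub_Ent} with $\mu_1=m_\varphi^{-1}\pi^\star\theta_\varphi^n$, $\mu_2=\pi^\star\omega^n$ and $\mu_3=\tilde\omega^n$, we conclude
\[
\Ent\bigl(m_\varphi^{-1}\pi^\star\theta_\varphi^n,\,\tilde\omega^n\bigr)\leq\Ent\bigl(m_\varphi^{-1}\pi^\star\theta_\varphi^n,\,\pi^\star\omega^n\bigr)+\log\bigl(\sup_Y g\bigr)<+\infty.
\]
For the last assertion, we use the standard identity $\pi^\star\theta_\varphi^n=(\pi^\star\theta)^n_{\pi^\star\varphi}$ for non-pluripolar products under bimeromorphic holomorphic maps, together with the preservation of total mass $m_{\pi^\star\varphi}=m_\varphi$ which follows from the same identity.

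The main technical point is the clean behaviour of the non-pluripolar product under $\pi^\star$ used both to define $\pi^\star\mu$ as a Borel measure on $Y$ (via the biholomorphism away from $F$ and extension by zero across the pluripolar $F$) and to identify $\pi^\star\theta_\varphi^n$ with $(\pi^\star\theta)^n_{\pi^\star\varphi}$. Both are standard consequences of the plurifine local nature of the non-pluripolar product and of the fact that the exceptional loci are pluripolar, but they are what makes the rest of the argument essentially a change of variables plus the domination lemma.
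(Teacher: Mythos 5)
Your proposal is correct and follows essentially the same route as the paper: part (i) via the biholomorphism of $\pi$ off pluripolar exceptional loci and the identity $\pi^\star\mu=(f\circ\pi)\pi^\star\nu$, and part (ii) by combining (i) with Lemma \ref{lem:Sub_Ent} applied to $\pi^\star\omega^n=g\,\tilde\omega^n$, $g\in L^\infty(Y)$, plus the identification $\pi^\star\theta_\varphi^n=(\pi^\star\theta)^n_{\pi^\star\varphi}$ (which the paper attributes to \cite[Theorem~A]{DN14}).
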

By $\pi^*\mu$, we mean the pushforward by $\pi^{-1}$ of $\mathbf{1}_{X\setminus Z}\mu$ where $Z$ is the indeterminacy locus of~$\pi^{-1}$ (see also \cite[lines after Definition~1.3]{BBEGZ19}).
 \begin{proof}
 For the first item, we simply note that if $\mu=f\nu$ then $\pi^\star \mu= (f\circ \pi) \pi^\star \nu$. The conclusion follows since $\pi$ is a biholomorphism on a Zariski dense open subset and $\mu$, $\nu$ do not charge pluripolar sets.

 Since $\Ent_\theta(\varphi)<+\infty$, we have $\theta_\varphi^n=f\omega^n$. We recall that \cite[Theorem~A]{DN14} ensures that $(\pi^*\theta_\varphi)^n= \pi^*\theta^n_\varphi$. The first item then imply
 \[
 \Ent\big(m_\varphi^{-1} (\pi^*\theta_\varphi)^n ,\pi^*\omega^n\big)<+\infty.
\]
 Also, as $\pi\colon Y\to X$ is holomorphic, we know that $\pi^*\omega$ is a semipositive smooth form. In particular, $\pi^*\omega^n=F \tilde{\omega}^n$ for some $F\in L^\infty(Y)$, $F\geq 0$. Lemma \ref{lem:Sub_Ent} (with $\mu_1=m_\varphi^{-1}\pi^*\theta^n_\varphi$, $\mu_2=\pi^*\omega^n$ , $\mu_3=\tilde{\omega}^n$ and $f_2=F$) gives
 \[
 \Ent(\mu_1, \mu_3 )<+\infty.\tag*{\qed}
 \] \renewcommand{\qed}{}
 \end{proof}

We end this note with a very natural stability question.

\begin{Question}
Let $\varphi \in \Ent(X, \theta)$. Is true that $\varphi \in \Ent(X, \theta+\varepsilon \omega)$ for $\varepsilon>0$?
\end{Question}

\begin{Proposition}\label{lemma: entropy all time}
Assume $\varphi \in \Ent(X, \theta)$ and that $\varphi \in \Ent(X, \theta+\varepsilon \omega)$ for some $\varepsilon>0$. Then~${\varphi \in \Ent(X, \theta+t\omega)}$ for all $t>0$.
\end{Proposition}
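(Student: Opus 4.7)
The plan is to prove the stronger statement that the Monge--Amp\`ere measure $(\theta+t\omega+{\rm dd}^c\varphi)^n$, viewed as a function of $t\geq 0$, is a polynomial in $t$ whose coefficients are absolutely continuous measures with $L\log L$ density. The starting point is the multilinear expansion
\[
(\theta+t\omega+{\rm dd}^c\varphi)^n \;=\; \sum_{k=0}^n \binom{n}{k}\, t^k\, \omega^k \wedge \theta_\varphi^{n-k},
\]
valid as an identity of non-pluripolar positive measures. I would justify this by approximating $\varphi$ with the canonical truncations $\varphi_j:=\max(\varphi,V_\theta-j)$, expanding by Bedford--Taylor multilinearity in the bounded setting, and passing to the limit using the plurifine locality of Lemma \ref{lem: plurifine} together with the non-pluripolar calculus of \cite{BEGZ10}.

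Setting $T_k:=\omega^k\wedge\theta_\varphi^{n-k}$ and writing the hypothesis $\varphi\in\Ent(X,\theta+\varepsilon\omega)$ as $(\theta+\varepsilon\omega+{\rm dd}^c\varphi)^n=g\,\omega^n$ with $g\log g\in L^1(\omega^n)$, the positivity of every $T_k$ in the expansion at $t=\varepsilon$ forces
\[
T_k \;\leq\; \bigl(\tbinom{n}{k}\varepsilon^k\bigr)^{-1}\, g\, \omega^n.
\]
Hence each $T_k$ is absolutely continuous, $T_k=g_k\,\omega^n$, with $0\leq g_k\leq C_k g$ for $C_k:=(\binom{n}{k}\varepsilon^k)^{-1}$. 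Since $x\mapsto x\log x$ is bounded on $[0,1]$ and increasing for $x\geq 1$, the pointwise bound $g_k\log^+ g_k \leq C_k g\log^+(C_k g)$ combined with $g\log g\in L^1(\omega^n)$ and $g\in L^1(\omega^n)$ yields $g_k\log g_k\in L^1(\omega^n)$ for every $k$.

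For an arbitrary $t>0$ the expansion gives $(\theta+t\omega+{\rm dd}^c\varphi)^n=h_t\,\omega^n$ with $h_t:=\sum_{k=0}^n \binom{n}{k}\, t^k\, g_k$. The finiteness $\int_X h_t\log h_t\,\omega^n < +\infty$ then follows by iterating the standard convexity inequality $(a+b)\log^+(a+b)\leq C\bigl(a\log^+ a + b\log^+ b + a+b+1\bigr)$ for non-negative $a,b$ across the $n+1$ summands, proving $\varphi\in\Ent(X,\theta+t\omega)$.

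The main technical obstacle I anticipate is the first step: ensuring that each $\omega^k\wedge\theta_\varphi^{n-k}$ is a well-defined positive Borel measure and that the multilinear expansion holds as an identity of non-pluripolar measures (not only distributionally). This is the content of the non-pluripolar calculus of \cite{BEGZ10}, and it is precisely what provides the pointwise domination $g_k\leq C_k g$ driving the remainder of the argument; once this is in place, the control of the entropy becomes routine. I also note that the second hypothesis $\varphi\in\Ent(X,\theta)$ is not actually used in the argument above, since finiteness of $\Ent_\theta(\varphi)$ follows from the $k=0$ term $T_0=\theta_\varphi^n$ of the expansion.
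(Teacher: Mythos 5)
Your proof is correct and follows essentially the same route as the paper: both expand $(\theta+t\omega+{\rm dd}^c\varphi)^n$ by multilinearity of the non-pluripolar product, use positivity of the mixed terms $\omega^k\wedge\theta_\varphi^{n-k}$ in the expansion at $t=\varepsilon$ to deduce that each is absolutely continuous with $L\log L$-controlled density, and reassemble at general $t$ (the paper merely shortcuts your term-by-term convexity argument by bounding the full density $S(\tau\varepsilon)$ directly by $\tau^n S(\varepsilon)$ for $\tau\geq 1$, using monotonicity of $x\mapsto\max(x\log x,0)$). Your closing observation that the finiteness of $\Ent_\theta(\varphi)$ is redundant is accurate, but note that the hypothesis $\varphi\in\PSH(X,\theta)$ is still needed so that $\theta_\varphi$ is a positive current and the non-pluripolar products $\omega^k\wedge\theta_\varphi^{n-k}$ are defined.
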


\begin{proof}
Set $L\colon \R^+\rightarrow \R$ such that $L(x)= x \log x$, let $\overline{L}:=\max(L,0)$. Then the function $\overline{L}$ is convex increasing and non-negative. If $f$ is a non-negative measurable function, since $L$ is bounded on $[0,1]$,
\[\int_X L(f) \omega^n < +\infty \qquad \text{if and only if}\qquad \int_X \overline{L}(f) \omega^n < +\infty. \]

We know that $\theta_u^n = f \omega^n$, $(\theta_u+\epsilon\omega)^n=(\theta+\epsilon\omega +{\rm dd}^c u)^n = h \omega^n$ with
\[\int_X f \log f< +\infty, \qquad \int_X h\log h< +\infty.\]
On the other hand, from the multilinearity of the non-pluripolar product it follows that
\[(\theta_u+\varepsilon\omega)^n =\theta_u^n + \sum_{j=0}^{n-1} \binom{n}{j} \varepsilon^{n-j} \omega^{n-j} \wedge \theta_u^j=\theta_u^n + \sum_{j=0}^{n-1} \binom{n}{j} \varepsilon^{n-j}\mu_j,\]
where $\mu_j:=\omega^{n-j}\wedge \theta_u^j$ are positive measures. Therefore, since the Monge--Amp\`ere measures~${(\theta_u\!+\varepsilon\omega)^n}$ and $\theta_u^n$ are absolutely continuous with respect to $\omega^n$, we have that if ${\omega^n(E)=0}$, $E\subset X$, then $\mu_j(E)=0$. This means that $\mu_j = h_j \omega^n$ for some $0\leq h_j\in L^1(\omega^n)$. Clearly, $h_0=1$.

Now, for any $t>0$, we have
\begin{equation*}
 (\theta_u+t\omega)^n= \theta_u^n+ \sum_{j=1}^{n-1} \binom{n}{j} t^{n-j}\mu_j+t^n \omega^n=\left( f +\sum_{j=1}^{n-1} \binom{n}{j} t^{n-j} h_j +t^n\right) \omega^n.
\end{equation*}
In particular, $(\theta_u+t \omega)^n$ is absolutely continuous with respect to $\omega^n$. Set
 \[S(t):=f+\sum_{j=1}^{n-1} \binom{n}{j} t^{n-j} h_j +t^n.\]

Note that $S(t)$ is non negative and increasing in $t$.
 We want to prove that the density $S(t)$ is such that $\int_X \overline{L}(S(t)) < +\infty $.

If $0 < t \leq \varepsilon$, then $\overline{L}(S(t)) \leq \overline{L}(S(\varepsilon))$. If $t = \tau \varepsilon$ with $\tau \geq 1$, then
\[
0\leq S(t)= S(\tau \varepsilon) \leq \tau^n S(\varepsilon).\]
Then
\begin{align*}
0 \leq \overline{L}(S(t)) &\leq \overline{L}(\tau^n S(\varepsilon)) ={\bf 1}_{\{\tau^n S(\varepsilon) \geq 1\}}( \tau^n \log \tau^n S(\varepsilon) ) + {\bf 1}_{\{\tau^n S(\varepsilon) \geq 1\}} ( \tau^n L (S(\varepsilon))) \\
&\leq \tau^n \log \tau^n S(\varepsilon) + \tau^n \overline{L}(S(\varepsilon)).
 \end{align*}
The quantity of the right-hand side is finite by assumption. Hence the conclusion.
 \end{proof}

\subsection*{Acknowledgements}

The first author is supported by the project SiGMA ANR-22-ERCS-0004-02 and by the ANR-21-CE40-0011 JCJC MARGE. The second author is partially supported by PRIN \emph{Real and Complex Ma\-ni\-folds: Topology, Geometry and Holomorphic Dynamics} no.~2017JZ2SW5, and by MIUR Excellence Department Projects awarded to the Department of Mathematics, University of Rome Tor Vergata, 2018--2022 CUP E83C18000100006, and 2023--2027 CUP E83C23000330006. The third author is supported by the ``Knut and Alice Wallenberg Foundation''.
 The authors thank Chinh Lu for discussions and the anonymous referee for his comments which helped improving the paper.

\pdfbookmark[1]{References}{ref}
\LastPageEnding

\end{document}